\newtheorem{thm}{Theorem}
\newtheorem{lem}{Lemma}
\newtheorem{cor}{Corollary}
\newtheorem{prop}{Proposition}
\let\oldenumerate\enumerate
\renewcommand{\enumerate}{
  \oldenumerate
  \setlength{\itemsep}{0pt}
  \setlength{\parskip}{0pt}
  \setlength{\parsep}{0pt}
}
\begin{document}

\title {Quasi-strongly regular graphs of grade three with diameter two}

\author{Songpon Sriwongsa}
\address{Songpon Sriwongsa\\ Mathematics and Statistics with Applications (MaSA), Department of Mathematics, Faculty of Science, King Mongkut's University of Technology Thonburi (KMUTT), Bangkok 10140, Thailand}
\email{\tt songpon.sri@kmutt.ac.th, songponsriwongsa@gmail.com}

\author{Pawaton Kaemawichanurat}
\address{Pawaton Kaemawichanurat\\ Mathematics and Statistics with Applications (MaSA), Department of Mathematics, Faculty of Science, King Mongkut's University of Technology Thonburi (KMUTT), Bangkok 10140, Thailand}
\email{\tt pawaton.kae@kmutt.ac.th}

\keywords{Quasi-strongly regular graph; Strongly regular graph; Characterization}

\subjclass[2010]{Primary: 05E30; Secondary: 05C75}

\begin{abstract}
%A graph $G$ is $(n, k, a; c_{1}, ..., c_{p})$-quasi-strongly regular, or $QSR(n, k, a; c_{1}, ..., c_{p})$ if $G$ is a $k$-regular graph of order $n$ which every pair of adjacent vertices share $a$ common neighbours and every pair of non-adjacent vertices share $c_{i}$ common neighbour for some $1 \leq i \leq p$. We say that $p$ is the grade of $G$. It was shown by Goldberg in 2006 that, when $p = 2$, every vertex $u$ of the $QSR(n, k, a; c_{1}, c_{2})$ graphs has the same number of vertices that share $c_{1}$ and $c_{2}$ common neighbours with $u$. However, when $p \geq 3$, Goldberg further gave an example of $QSR(n, k, a; c_{1}, c_{2}, c_{3})$ graph that different vertices $u$ and $v$ of $G$ have different numbers of vertices that share $c_{1}, c_{2}$ and $c_{3}$ common neighbours with $u$ and $v$ and the problem becomes very challenging. In this paper, we say that the $QSR(n, k, a; c_{1}, ..., c_{p})$ graph is strictly if, for all $1 \leq i \leq p$, there exists a pair of non-adjacent vertices that share $c_{i}$ common neighbours. We prove that if $G$ is a strictly $QSR(n, k, 0; k - 1, k - 2, k - 3)$ graph of diameter two, then
%\begin{center}
%$2k + 3 \leq n \leq k^{2} - 4.$
%\end{center}
%\noindent All the extremal graphs are characterized. Further, by our result, we characterize all strictly $QSR(n, k, 0; k - 1, k - 2, k - 3)$ graphs when $k = 4$.

A quasi-strongly regular graph of grade $p$ with parameters $(n, k, a; c_1, \ldots, c_p)$ is a $k$-regular graph of order $n$ such that any two adjacent vertices share $a$ common neighbours and any two non-adjacent vertices share $c_{i}$ common neighbours for some $1 \leq i \leq p$. This is a generalization of a strongly regular graph. In this paper, we focus on strictly quasi-strongly regular graphs of grade $3$ with $c_i = k - i$ for $i = 1, 2, 3$. The main result is to show the sharp bounds of order $n$ for a given $k \geq 4$. Furthermore, by this result, we characterize all of these graphs whose $n$ satisfies  upper or lower bounds.
\end{abstract}

\date{}

 \maketitle

\section{Introduction}

Let $G = (V(G), E(G))$ be a graph where $V(G)$ is the set of all vertices and $E(G)$ is the set of all edges. The \emph{order} of $G$ is $|V(G)|$. For a vertex $u \in V(G)$, a \emph{neighbour} of $u$ in $G$ is a vertex $v \in V(G) \setminus \{u\}$ that is adjacent to $u$. The \emph{neighbour set} of $u$ in $G$ is the set of all neighbours of $u$ in $G$ and is denoted by $N_{G}(u)$. The \emph{closed neighbour set} of $u$ in $G$ is $N_{G}[u] = N_{G}(u) \cup \{u\}$. The \emph{degree} of $u$ in $G$, denoted $deg_{G}(u)$, is the number of neighbours of $u$ in $G$, i.e., $deg_{G}(u) = |N_{G}(u)|$. In particular, for a subset $X$ of $V(G)$, a neighbour of $u$ in $X$ is a vertex $v$ in $X \setminus \{u\}$ that is adjacent to $u$. Hence, the neighbour set of $u$ in $X$ is the set of vertices in $X$ that are adjacent to $u$ and is denoted by $N_{X}(u)$, and the \emph{degree} of $u$ in $X$ is $deg_{X}(u) = |N_{X}(u)|$. Clearly, $N_{X}(u) = N_{G}(u) \cap X$. Similarly, for a subgraph $H$ of $G$, we denote $N_{V(H)}(u)$ and $deg_{V(H)}(u)$ by $N_{H}(u)$ and $deg_{H}(u)$, respectively. Further, for subsets $X$ and $Y$ of $V(G)$, we let $N_{Y}(X) = \{v \in Y : vu \in E(G)$ for some $u \in X\}$. In particular, if $x \in X$, the \emph{private neighbour set of} $x$ \emph{in} $Y$ \emph{with respect to} $X$ is denoted by $PN_{Y}(x, X)$ which is the set $\{v \in Y : N_{X}(v) = \{x\}\}$. The subgraph of $G$ \emph{induced} by $X$ is denoted by $G[X]$. A subset $X$ of $V(G)$ is \emph{independent} if $G[X]$ has no edge. The \emph{independence number} of $G$ is the maximum cardinality of an independent set of $G$ and is denoted by $\alpha(G)$. When no ambiguity can occur, we denote $N_{G}(u), N_{G}[u]$ and $deg_{G}(u)$ by $N(u), N[u]$ and $deg(u)$, respectively. For vertices $u, v \in V(G)$, the \emph{distance} between $u$ and $v$ is the length of a shortest path in $G$ starting from $u$ to $v$. The \emph{diameter} of $G$ is the maximum distance of any pair of vertices of $G$.
\vskip 5 pt

A graph $G$ is called $k$\emph{-regular} if every vertex of $G$ has degree $k$. A $k$-regular graph $G$ of order $n$ is said to be $(n, k, \mu, \lambda)$\emph{-strongly regular} if any two adjacent vertices share $\mu$ common neighbours and any two non-adjacent vertices share $\lambda$ common neighbours, while an $(n, k, a, b)$\emph{-Deza graph} is a $k$-regular graph of order $n$ which any two vertices share either $a$ or $b$ common neighbours where the number of common neighbours between any two vertices does not depend on their adjacency. Furthermore, an $(n, k, a; c_{1}, ..., c_{p})$\emph{-quasi-strongly regular graph}, \emph{or a} $QSR(n, k, a; c_{1}, ..., c_{p})$ \emph{graph}, is a $k$-regular graph of order $n$ such that any two adjacent vertices share $a$ common neighbours and any two non-adjacent vertices share $c_{i}$ common neighbours for some $1 \leq i \leq p$. The {\it grade} of this graph is the number of indices $1 \leq i \leq p$ for which there exist two non-adjacent vertices with $c_i$ common neighbours. Moreover, if the graph is of grade $p$, then it is said to be {\it proper}. A proper $QSR(n, k, a; c_{1}, ..., c_{p})$ is said to be \emph{strictly} quasi-strongly regular and denoted by $SQSR(n, k, a; c_{1}, ..., c_{p})$, if all $a, c_{1}, ..., c_{p}$ are distinct.
\vskip 5 pt

A concept of strongly regular graphs was introduced by Bose~\cite{B} in his classical paper in order to show the connection between these graphs with the concept of \emph{partial geometry} which plays an important role in area of coding theory in the part of coding techniques, the so called \emph{low density parity check code (LDPC)}. For example, see~\cite{DLLB,DDJ,JW,LLLA,LXDT,V}. Also, in~\cite{B}, Bose pointed out that the concept of strongly regular graphs is, up to isomorphism, the same as the condition list called \emph{association schemes} of \emph{partially balanced incomplete block design (PBIB)} in~\cite{BS} which has been extensively studied in area of combinatorial design.
\vskip 5 pt

 The research of strongly regular graphs has not only been enriched by a number of applications as mentioned in the above paragraph (for example), but it also has been varied and connected to many other mathematical theoretical concepts. One of very interesting connection was pointed out by Rotman~\cite{R} that a central simple Lie algebra over $\mathbb{Z}/ 2\mathbb{Z}$ can be constructed from a strongly regular graph. An example of a generalization of strongly regular graphs is the concept of \emph{Deza graphs} which was initiated by Erickson et. al.~\cite{EFHHH}. In fact the authors were inspired by Deza and Deza~\cite{DD} who provided relationship of some Deza graph in their work concerning \emph{metric polytope}. For more example of studies in Deza graphs see~\cite{GHKS,KMS,KS}. Another generalization of strongly regular graphs is the concept of \emph{quasi-strongly regular graphs} which was introduced in Golightly et. al.~\cite{GHS,GHS2} and was popularized  by Goldberg~\cite{G}. Interestingly, in~\cite{G}, the author refined an observation in \cite{N} to show connection between quasi-strongly regular graphs and distance regular graphs. For more details about the latter graphs see Brouwer et. al.~\cite{BCN} and Cameron\cite{C}.
 \vskip 5 pt

 In the study of strictly quasi-strongly regular graphs of grade $2$, Goldberg~\cite{G} established the following two equations.

\begin{equation}
t_{1} + t_{2} = n - k - 1\notag
\end{equation}
and
\begin{equation}
c_{1}t_{1} + c_{2}t_{2} = k(k - a - 1)\notag
\end{equation}
where $t_{i} = t_i(u)$ is the number of vertices in $V(G) \setminus N[u]$ that share $c_{i}$ common neighbours for a fixed vertex $u \in V(G)$. It is worth noting that the proofs of these equations are obtained from choosing an arbitrary vertex $u$ and counting the number of vertices in $V(G) \setminus N[u]$ and counting the number of edges between $N(u)$ and $V(G) \setminus N[u]$. As there are only two variables $t_{1}$ and $t_{2}$ and two equations, the values $t_{1}$ and $t_{2}$ do not depend on the choice of $u$. However, this is not always true for the graphs of grade $p \geq 3$. In the same paper, it was pointed out that there exists an example of this type of graph of grade $3$ with parameters $(12, 4, 0; 3, 2, 1)$ that the values of $t_{1}(u), t_{2}(u), t_{3}(u)$ are vary and depending on a vertex $u$. So, a new machinery may be required to study the graphs of higher grade. These motivate us to explore  strictly quasi-strongly regular graphs of grade $3$. In this work, we focus on the graphs $SQSR(n, k, 0; k - 1, k - 2, k - 3)$ for $k \geq 4$. Of course, these graphs are a generalization of the above example and they have diameter $2$. The main result here is to find the sharp bounds of $n$ for a given $k \geq 4$. Moreover, we can characterize these graphs whose $n$ is equal to the upper or lower bound. Some proper quasi-strongly regular graphs with particular parameters have been characterized in \cite{JYZ, XJZ}. According to our knowledge, these graphs of grade higher than 2 with any parameters have not been characterized yet. It can be shown in this paper that there is only one graph of $SQSR(11, 4, 0; 3, 2, 1)$ and only one graph of $SQSR(12, 4, 0; 3, 2, 1)$, up to isomorphism.

\section{Main results}
In this section, we illustrate the main theorem where the proof is provided in Section \ref{proofs}. The result is to establish upper and lower bounds of $n$ for $SQSR(n, k, 0; k - 1, k - 2, k - 3)$ graphs when $k \geq 4$. Further, we characterize all such graphs achieving the upper and lower bound for $n$.

Let us introduce quasi-strongly regular graphs $G_1$ of order $11$ and $G_2$ of order $12$ in the figures below.

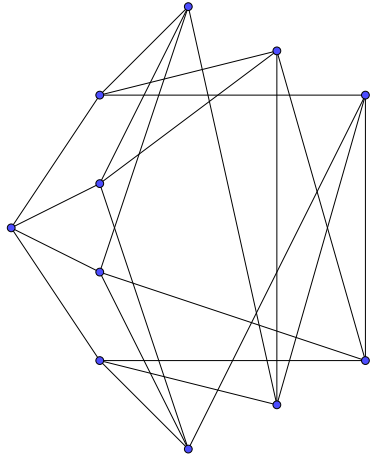
\begin{figure}[H]
\centering
\definecolor{ududff}{rgb}{0.30196078431372547,0.30196078431372547,1}
\resizebox{0.35\textwidth}{!}{%
\begin{tikzpicture}[line cap=round,line join=round,>=triangle 45,x=1cm,y=1cm]
%\clip(-9.963514871137317,-9.3747732545775) rectangle (15.801968014629482,6.293991947498556);
\draw [line width=0.4pt] (-6,-2)-- (-4,1);
\draw [line width=0.4pt] (-6,-2)-- (-4,-1);
\draw [line width=0.4pt] (-6,-2)-- (-4,-3);
\draw [line width=0.4pt] (-6,-2)-- (-4,-5);
\draw [line width=0.4pt] (-4,1)-- (-2,3);
\draw [line width=0.4pt] (-4,-1)-- (-2,3);
\draw [line width=0.4pt] (-4,-3)-- (-2,3);
\draw [line width=0.4pt] (-4,1)-- (0,2);
\draw [line width=0.4pt] (-4,-1)-- (0,2);
\draw [line width=0.4pt] (-4,-1)-- (-2,-7);
\draw [line width=0.4pt] (-4,1)-- (2,1);
\draw [line width=0.4pt] (-4,-3)-- (2,-5);
\draw [line width=0.4pt] (-4,-3)-- (-2,-7);
\draw [line width=0.4pt] (-4,-5)-- (-2,-7);
\draw [line width=0.4pt] (-2,-7)-- (2,1);
\draw [line width=0.4pt] (-4,-5)-- (0,-6);
\draw [line width=0.4pt] (-4,-5)-- (2,-5);
\draw [line width=0.4pt] (-2,3)-- (0,-6);
\draw [line width=0.4pt] (0,2)-- (0,-6);
\draw [line width=0.4pt] (0,2)-- (2,-5);
\draw [line width=0.4pt] (0,-6)-- (2,1);
\draw [line width=0.4pt] (2,1)-- (2,-5);
\begin{scriptsize}
\draw [fill=ududff] (-6,-2) circle (2.5pt);
\draw [fill=ududff] (-4,1) circle (2.5pt);
\draw [fill=ududff] (-4,-1) circle (2.5pt);
\draw [fill=ududff] (-4,-3) circle (2.5pt);
\draw [fill=ududff] (-4,-5) circle (2.5pt);
\draw [fill=ududff] (-2,3) circle (2.5pt);
\draw [fill=ududff] (0,2) circle (2.5pt);
\draw [fill=ududff] (-2,-7) circle (2.5pt);
\draw [fill=ududff] (0,-6) circle (2.5pt);
\draw [fill=ududff] (2,1) circle (2.5pt);
\draw [fill=ududff] (2,-5) circle (2.5pt);
\end{scriptsize}
\label{g1}
\end{tikzpicture}
 }%
\vskip -0.25 cm
\caption{The graph $G_{1}$.}
\label{g1}
\end{figure}

\begin{figure}[H]
\centering
\definecolor{ududff}{rgb}{0.30196078431372547,0.30196078431372547,1}
\resizebox{0.45\textwidth}{!}{%
\begin{tikzpicture}[line cap=round,line join=round,>=triangle 45,x=1cm,y=1cm]
%\clip(-10.467977787618807,-5.09652186265566) rectangle (11.29647916803348,8.236658975041195);
\draw [line width=0.4pt] (-1,4)-- (-4,2);
\draw [line width=0.4pt] (-1,4)-- (-2,2);
\draw [line width=0.4pt] (-1,4)-- (0,2);
\draw [line width=0.4pt] (-1,4)-- (2,2);
\draw [line width=0.4pt] (1,4)-- (-2,2);
\draw [line width=0.4pt] (1,4)-- (0,2);
\draw [line width=0.4pt] (1,4)-- (2,2);
\draw [line width=0.4pt] (1,4)-- (4,2);
\draw [shift={(0,2)},line width=0.4pt]  plot[domain=0:3.141592653589793,variable=\t]({1*4*cos(\t r)+0*4*sin(\t r)},{0*4*cos(\t r)+1*4*sin(\t r)});
\draw [line width=0.4pt] (-4,2)-- (-6,0);
\draw [line width=0.4pt] (-6,0)-- (-2,2);
\draw [line width=0.4pt] (-6,0)-- (6,0);
\draw [line width=0.4pt] (-6,0)-- (4,-2);
\draw [line width=0.4pt] (-4,-2)-- (-4,2);
\draw [line width=0.4pt] (-4,-2)-- (0,2);
\draw [line width=0.4pt] (-4,-2)-- (6,0);
\draw [line width=0.4pt] (-4,-2)-- (0,-2);
\draw [line width=0.4pt] (0,-2)-- (-2,2);
\draw [line width=0.4pt] (0,-2)-- (2,2);
\draw [line width=0.4pt] (0,-2)-- (4,-2);
\draw [line width=0.4pt] (4,-2)-- (4,2);
\draw [line width=0.4pt] (4,-2)-- (0,2);
\draw [line width=0.4pt] (6,0)-- (4,2);
\draw [line width=0.4pt] (6,0)-- (2,2);
\begin{scriptsize}
\draw [fill=ududff] (-1,4) circle (2.5pt);
\draw [fill=ududff] (1,4) circle (2.5pt);
\draw [fill=ududff] (-2,2) circle (2.5pt);
\draw [fill=ududff] (0,2) circle (2.5pt);
\draw [fill=ududff] (2,2) circle (2.5pt);
\draw [fill=ududff] (-4,2) circle (2.5pt);
\draw [fill=ududff] (4,2) circle (2.5pt);
\draw [fill=ududff] (-6,0) circle (2.5pt);
\draw [fill=ududff] (-4,-2) circle (2.5pt);
\draw [fill=ududff] (0,-2) circle (2.5pt);
\draw [fill=ududff] (4,-2) circle (2.5pt);
\draw [fill=ududff] (6,0) circle (2.5pt);
\end{scriptsize}
\end{tikzpicture}
 }%
\vskip -0.25 cm
\caption{The graph $G_{2}$.}
\label{g2}
\end{figure}
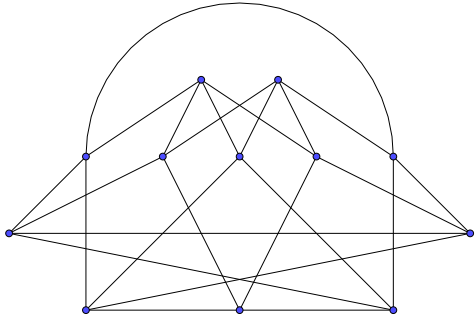

\begin{thm}[Main Theorem]\label{main ub}
Let $G$ be an $SQSR(n, k, 0; k - 1, k - 2, k - 3)$ graph when $k \geq 4$. Then
\begin{align}
2k + 3 \leq n \leq k^{2} - 4.\notag
\end{align}
Furthermore, $G$ achieves the upper bound if and only if $G$ is isomorphic to $G_{2}$ and $G$ achieves the lower bound if and only if $G$ is isomorphic to $G_{1}$.
\end{thm}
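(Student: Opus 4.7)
The plan is threefold: establish basic double-counting identities, use a structural observation to force $k = 4$, and then carry out a case analysis that pins down the bounds and classifies the extremal graphs.

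\textbf{Setup and counting.} Fix $u \in V(G)$ and let $W = V(G) \setminus N[u]$. Since $a = 0$, the graph is triangle-free, so $N(u)$ is an independent set and for every $w \in W$ the count $|N(w) \cap N(u)|$ lies in $\{k-1,k-2,k-3\}$; equivalently $|N(w) \cap W| \in \{1,2,3\}$. Let $t_i(u)$ denote the number of $w \in W$ with $|N(w) \cap W| = i$. Counting $|W|$ and the edges between $N(u)$ and $W$ yields
\[
t_1 + t_2 + t_3 = n - k - 1
\quad\text{and}\quad
(k-1)t_1 + (k-2)t_2 + (k-3)t_3 = k(k-1),
\]
whence $t_2 + 2t_3 = (k-1)(n - 2k - 1)$ and $2t_1 + t_2 = k(k-1) - (k-3)(n-k-1)$.

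\textbf{Reducing to $k = 4$.} The strict grade $3$ hypothesis guarantees a non-adjacent pair $(u,w)$ with $|N(u) \cap N(w)| = k-1$. Let $v^*$ be the unique vertex of $N(u) \setminus N(w)$ and $w'$ the unique vertex of $N(w) \setminus N(u)$; note $w' \in W$. Triangle-freeness forces $N(v^*) \subseteq \{u\} \cup W$ while $N(w) \cap W = \{w'\}$, so $|N(v^*) \cap N(w)| \leq 1$. Since $v^*$ and $w$ are non-adjacent they must share at least $k - 3$ common neighbours, forcing $k \leq 4$. Hence the theorem is vacuously true for $k \geq 5$, and from now on $k = 4$ is assumed.

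\textbf{Bounds and characterization for $k = 4$.} The identities become $t_1 + t_2 + t_3 = n - 5$ and $3t_1 + 2t_2 + t_3 = 12$, so the feasible tuples at each vertex form a one-parameter family. For the lower bound, $n = 9$ gives $t_2 + 2t_3 = 0$ and violates grade $3$; $n = 10$ leaves only the local tuples $(3,1,1)$ and $(2,3,0)$, which I would rule out by combining the coupling from the reduction -- each $c_1$-pair $(u,w)$ carries a distinguished $c_3$-pair $(v^*,w)$ sharing the vertex $w'$ -- with a direct inspection of the few triangle-free graphs on five vertices realising the prescribed degree sequence of $G[W]$. For the upper bound, $n \geq 13$ forces $t_3(u) \geq 5$ at every vertex, and I would derive a contradiction by double counting the triples $(u,w,w')$ produced by the reduction against the global pair-counting identity $3p_1 + 2p_2 + p_3 = n\binom{k}{2}$, where $p_i$ is the total number of $c_i$-pairs in $G$. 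Finally, for $n = 11$ and $n = 12$, once the admissible local tuples are identified, choosing a vertex $u$ with $t_1(u)$ (respectively $t_2(u)$) maximal and propagating the $c_i$-constraints through the reduction structure forces each adjacency, producing graphs isomorphic to $G_1$ and $G_2$ respectively.

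\textbf{Main obstacle.} The crux will be the upper bound together with the uniqueness claims. The counting equations alone permit many numerical possibilities at a single vertex, so the argument must exploit global consistency: the $c_i$-condition at \emph{every} non-adjacent pair, not just those incident to the chosen $u$. In particular, ruling out $n = 13$ cleanly and showing that $n \in \{11, 12\}$ each admits a unique graph up to isomorphism will require tracking the $(u,w) \mapsto (v^*, w')$ coupling across multiple base vertices simultaneously.
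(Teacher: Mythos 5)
Your reduction to $k=4$ is correct and is a genuinely different — and much more efficient — route than the paper's. Since the graph is proper, some non-adjacent pair $(u,w)$ realizes $c_1=k-1$; with $a=0$ the set $N(u)$ is independent, so the unique vertex $v^*\in N(u)\setminus N(w)$ has no neighbour in $N(u)\cap N(w)$, whence $N(v^*)\cap N(w)\subseteq N(w)\setminus N(u)$, a singleton, and $k-3\le 1$. This single observation shows that \emph{no} $SQSR(n,k,0;k-1,k-2,k-3)$ graph exists for $k\ge 5$, making the theorem vacuous there; the paper instead spends its two longest lemmas (the $k^2-4\le n$ analysis and the $k\le 6$ casework in the lower-bound section) establishing $k=4$ only under extra hypotheses on $n$. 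Your argument subsumes all of that, and also produces the useful coupling that every $c_1$-pair forces a $c_3$-pair with a prescribed unique common neighbour.

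However, after this reduction the proposal stops being a proof. The upper bound $n\le 12$, the exclusion of $n=10$, and both uniqueness claims are left as plans, and the one plan you describe in any detail looks unlikely to close. For $n=13$ the system $t_1+t_2+t_3=8$, $3t_1+2t_2+t_3=12$ admits $(t_1,t_2,t_3)=(0,4,4)$, so your claim that $t_3(u)\ge 5$ is already false; more importantly, the global identity $3p_1+2p_2+p_3=6n$ together with $p_1+p_2+p_3=\binom{n}{2}-2n$ only yields $p_3=p_1+26$ at $n=13$, and the coupling $(u,w)\mapsto(v^*,w)$ is not obviously injective, so no contradiction falls out of this double count. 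The paper's upper-bound argument is instead local: for a $c_1$-pair $x,y$ with $Z=N(x)\cap N(y)$, $|Z|=3$, the set $Z$ dominates at most $6$ of the $n-9$ remaining vertices, and any undominated vertex must be adjacent to both $x_1$ and $y_1$, creating a triangle; this kills $n\ge 14$ outright and $n=13$ after a short degree count. Similarly, the $n=10$ case and the identification of $G_1$ and $G_2$ require the explicit structural analysis (independence-number and degree-sequence arguments on the five- and six-vertex induced subgraphs) that the paper carries out and that you have only gestured at. As it stands the proposal proves the $k=4$ reduction and $n\ne 9$, but not the stated bounds or the characterizations.
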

\vskip 5 pt

\noindent Note that the graphs $G_{1}$ and $G_{2}$ are $SQSR(11, 4, 0; 3, 2, 1)$ and $SQSR(12, 4, 0; 3, 2, 1)$, respectively. Thus we completely characterize all $SQSR(n, k, 0; k - 1, k - 2, k - 3)$ graphs when $k = 4$ as detailed in the following corollary.
\vskip 5 pt

\begin{cor}
Let $G$ be an $SQSR(n, k, 0; k - 1, k - 2, k - 3)$ graph. If $k = 4$, then $G$ is either $G_{1}$ or $G_{2}$.
\end{cor}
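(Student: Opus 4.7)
The plan begins with the two Goldberg-style counting identities. Fixing a vertex $u$ and counting $|V(G) \setminus N[u]|$ and the edges between $N(u)$ and $V(G) \setminus N[u]$ (using that $a = 0$ forces $G$ to be triangle-free, so every edge from $N(u)$ exits $N(u)$) yields
\begin{align*}
t_1(u) + t_2(u) + t_3(u) &= n - k - 1,\\
(k-1)t_1(u) + (k-2)t_2(u) + (k-3)t_3(u) &= k(k-1),
\end{align*}
from which two convenient combinations follow:
\begin{align*}
2t_1(u) + t_2(u) &= k(k-1) - (k-3)(n-k-1),\\
t_2(u) + 2t_3(u) &= (k-1)(n-2k-1).
\end{align*}
A crucial structural consequence of triangle-freeness is that every $v \in V \setminus N[u]$ has exactly $k - |N(u) \cap N(v)| \in \{1,2,3\}$ neighbors inside $V \setminus N[u]$; hence the induced subgraph $H_u := G[V \setminus N[u]]$ has maximum degree $3$, and $t_i(u)$ counts exactly the degree-$i$ vertices of $H_u$.

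\textbf{Lower bound.} Non-negativity of $t_2(u) + 2t_3(u)$ gives $n \geq 2k+1$ at once. If $n = 2k+1$, the identity forces $t_2(u) = t_3(u) = 0$ for every $u$, so no pair realizes $c_2$ or $c_3$, contradicting grade $3$. If $n = 2k+2$, then $t_2(u) + 2t_3(u) = k-1$ and $2t_1(u) + t_2(u) = k+3$ for every $u$; the contradiction would come from taking a $u$ with $t_3(u) \geq 1$ (guaranteed by grade $3$), picking a degree-$3$ vertex $v$ of $H_u$, and tracing $v$'s three $H_u$-neighbors through $N(u)$ to expose either a forbidden triangle or a common-neighbor count outside $\{0,k-1,k-2,k-3\}$. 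This gives $n \geq 2k+3$.

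\textbf{Upper bound.} The inequality $(k-1)t_1 + (k-2)t_2 + (k-3)t_3 \geq (k-3)(n-k-1)$ combined with the second Goldberg equation gives $n \leq k + 1 + k(k-1)/(k-3) = 2k + 3 + 6/(k-3)$, and a direct calculation verifies $2k + 3 + 6/(k-3) \leq k^2 - 4$ for all $k \geq 5$, so in this regime $n \leq k^2 - 4$ is automatic. The delicate case is $k = 4$, where Goldberg only yields $n \leq 17$. I would, for each candidate $n \in \{13,\ldots,17\}$, enumerate the admissible triples $(t_1(u),t_2(u),t_3(u))$ permitted by the two identities (only a parity-constrained handful survive), and for each such triple rule out $G$ by inspecting the small possible degree sequences of $H_u$ — a graph on at most $12$ vertices with $\Delta \leq 3$ — against the bipartite edge pattern between $N(u)$ and $V \setminus N[u]$ forced by the common-neighbor counts $c_i = k-i$.

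\textbf{Characterization and main obstacle.} For the equality cases the same type of local analysis forces $k = 4$: when $n = 2k+3$ the equations specialize to $2t_1(u) + t_2(u) = 6$ and $t_2(u) + 2t_3(u) = 2(k-1)$, and an $H_u$-based argument in the spirit of the $n=2k+2$ analysis excludes $k \geq 5$; when $n = k^2 - 4$, the upper-bound argument already pins us to $k=4$, $n=12$. At $k=4$ the numerical data are rigid enough to pin down the degree sequence of $H_u$ and the adjacency between $N(u)$ and $H_u$ uniquely, and reconstructing $G$ edge by edge yields $G \cong G_1$ for $n=11$ and $G \cong G_2$ for $n=12$. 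The principal obstacle throughout is precisely this rigidity step: moving from numerical feasibility of the $t_i(u)$ triples to nonexistence or uniqueness of an actual graph requires a careful case analysis of the small graph $H_u$ together with the forced bipartite edges back into $N(u)$, since neither the Goldberg identities nor triangle-freeness alone are sharp enough.
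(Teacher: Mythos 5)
Your skeleton is sound and in two places genuinely slicker than the paper's: the observation that $t_2(u)+2t_3(u)=(k-1)(n-2k-1)\ge 0$ kills $n\le 2k$ and, via the grade-$3$ hypothesis, $n=2k+1$ in one stroke (the paper rules out $n'=k-1$ and $n'=k$ by separate pigeonhole/structural arguments); and your upper bound $(k-3)(n-k-1)\le k(k-1)$, i.e.\ $n\le 2k+3+6/(k-3)$, is far sharper than the paper's starting point $n'\le k(k-1)$ for $k\ge 5$ and still gives $n\le 17$ at $k=4$. For $k=4$ the same identity $2t_1(u)+t_2(u)=17-n$ together with grade $3$ (some $u$ must have $t_1(u)\ge 1$) even disposes of $n\in\{16,17\}$ immediately. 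Note also that the corollary you are proving is, in the paper, a one-line consequence of the Main Theorem; you are in effect re-proving the Main Theorem at $k=4$, which is legitimate but means you inherit all of its hard cases.

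The genuine gaps are exactly those hard cases, which you describe but do not execute. (i) For $n=2k+2$ ($n'=k+1$) your plan --- pick $u$ with $t_3(u)\ge 1$, take a degree-$3$ vertex of $H_u$ and ``trace'' its neighbours to a contradiction --- is not an argument; the paper's proof of this step first bounds $k\le 6$ and then, for $k=4$, must enumerate the graphs with degree sequences $2,2,2,1,1$ and $3,2,1,1,1$ and use that $U_1$ is an independent set of size $3$ in $H_u$ together with uniqueness (or near-uniqueness) of maximum independent sets to force two vertices of $U$ to share $k$ common neighbours. Nothing in your sketch produces ``a common-neighbour count outside $\{0,k-1,k-2,k-3\}$'' without that independence-number input. (ii) For $k=4$ and $n\in\{13,14,15\}$, several triples $(t_1,t_2,t_3)$ with $t_1\ge 1$ survive your parity screen, so degree sequences of $H_u$ alone do not finish; the paper instead picks non-adjacent $x,y$ with three common neighbours $Z$ and derives a triangle or a degree deficiency from $|N_H(Z)|\le 6$. (iii) Most seriously, your claim that at $n=11$ and $n=12$ the numerics pin down ``the adjacency between $N(u)$ and $H_u$ uniquely'' is false as stated: the paper needs Lemmas~\ref{c2}--\ref{c3} and a two-case analysis for $n=12$, and for $n=11$ a dedicated lemma (Lemma~\ref{b2}) handling the configuration where two neighbours of $u$ have only $u$ in common, followed by a separate argument when no such configuration exists. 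Until these three case analyses are actually carried out, the proposal establishes only $10\le n\le 15$ for $k=4$, not the corollary.
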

\vskip 5 pt

\noindent We further have the following corollary.
\vskip 5 pt

\begin{cor}
Let $G$ be an $SQSR(n, k, 0; k - 1, k - 2, k - 3)$ graph. If $k \geq 5$, then
\begin{center}
$2k + 4 \leq n \leq k^{2} - 5$.
\end{center}
\end{cor}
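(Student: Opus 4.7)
The plan is to deduce this corollary directly from Theorem~\ref{main ub}, without any new combinatorial work. Applying the Main Theorem to any $SQSR(n, k, 0; k - 1, k - 2, k - 3)$ graph (which only requires $k \geq 4$, and so in particular holds under the stronger hypothesis $k \geq 5$) immediately yields the weaker bounds
\[
2k + 3 \leq n \leq k^{2} - 4.
\]
To obtain the sharper bounds claimed in the corollary, it therefore suffices to rule out the two endpoints $n = 2k + 3$ and $n = k^{2} - 4$ under the assumption $k \geq 5$.

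For this, I would invoke the ``if and only if'' portion of Theorem~\ref{main ub}: the equality $n = 2k + 3$ forces $G \cong G_{1}$, and the equality $n = k^{2} - 4$ forces $G \cong G_{2}$. As recorded in the remark immediately after the statement of Theorem~\ref{main ub}, the graph $G_{1}$ is an $SQSR(11, 4, 0; 3, 2, 1)$ graph and $G_{2}$ is an $SQSR(12, 4, 0; 3, 2, 1)$ graph, so both extremal graphs are $4$-regular. Since our graph $G$ is assumed to be $k$-regular with $k \geq 5$, it cannot be isomorphic to either $G_{1}$ or $G_{2}$. Consequently, both of the inequalities $n \geq 2k + 3$ and $n \leq k^{2} - 4$ supplied by the Main Theorem must be strict; equivalently, $n \geq 2k + 4$ and $n \leq k^{2} - 5$, which is exactly the assertion of the corollary.

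The only point requiring care is the logical direction of the characterization: one must ensure that the Main Theorem really gives ``only if'' at each extremum, so that the extremal values of $n$ are attained \emph{solely} by the graphs $G_{1}$ and $G_{2}$. This is already built into the biconditional statement of Theorem~\ref{main ub}, so there is no genuine obstacle; the corollary is a one-step consequence once the $k$-regularity of the extremal graphs is read off from Figures~\ref{g1} and~\ref{g2}.
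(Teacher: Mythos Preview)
Your argument is correct and is exactly the intended derivation: the paper states this corollary without proof, treating it as an immediate consequence of the Main Theorem via the characterization of the extremal graphs $G_{1}$ and $G_{2}$ as $4$-regular. There is nothing to add.
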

\vskip 5 pt

\section{Proof of the main theorem}\label{proofs}
Let $u \in V(G)$, $G' = G - N[u], N_{G}(u) = U = \{u_{1}, ..., u_{k}\}$ and $n' = |V(G')|$. Because $c_{3} = k - 3 > 0$, $V(G')$ can be partitioned into $T_{1}, T_{2}, T_{3}$ where $T_{i}$ is the set of vertices sharing $c_{i}$ neighbours with $u$ for all $i \in \{1, 2, 3\}$. We also let $t_{i} = |T_{i}|$. Throughout the proof, all $t_{1}, t_{2}, t_{3}$ depend on the vertex $u$ only. We first state the following equations in the first proposition. These were also presented in the proofs of the case of strictly strongly regular graphs of grade $2$ \cite{G}. For the completeness, we also give the detail of the proof here.

\begin{prop}
Under the above setting, we have
\begin{equation}\label{tool1}
t_{1} + t_{2} + t_{3} = n - k - 1
\end{equation}
\noindent and
\begin{equation}\label{tool2}
c_{1}t_{1} + c_{2}t_{2} + c_{3}t_{3} = k(k - 1).
\end{equation}
\end{prop}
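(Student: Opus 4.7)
The plan is to establish both identities by direct counting, closely following the well-known grade-two argument of Goldberg. The proof is short, with the only non-trivial ingredient being the use of the hypothesis $a = 0$.

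For equation (\ref{tool1}), I would first note that $c_{3} = k - 3 \geq 1$ since $k \geq 4$, so every vertex of $V(G')$ shares at least one neighbour with $u$. Consequently $\{T_{1}, T_{2}, T_{3}\}$ is a partition of $V(G')$, giving $t_{1} + t_{2} + t_{3} = |V(G')| = n - |N[u]| = n - k - 1$, which proves (\ref{tool1}).

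For equation (\ref{tool2}), the strategy is a two-way count of the set of edges between $U$ and $V(G')$. The key preliminary observation is that $U$ is independent: if $u_{i} u_{j} \in E(G)$ for some $u_{i}, u_{j} \in U$, then the adjacent pair $u_{i}, u_{j}$ would share the common neighbour $u$, contradicting $a = 0$. Given independence of $U$, every $u_{i} \in U$ has exactly one neighbour in $N[u]$ (namely $u$), so its remaining $k - 1$ neighbours all lie in $V(G')$, forcing the number of edges between $U$ and $V(G')$ to equal $k(k-1)$. From the $V(G')$ side, each $w \in T_{i}$ has, by the definition of $T_{i}$, exactly $|N(w) \cap U| = |N(w) \cap N(u)| = c_{i}$ neighbours in $U$, so summing over the partition $V(G') = T_1 \cup T_2 \cup T_3$ gives $c_{1}t_{1} + c_{2}t_{2} + c_{3}t_{3}$ edges. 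Equating the two counts yields (\ref{tool2}).

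There is essentially no serious obstacle here: both identities are forced by the definitions once the triangle-free condition $a = 0$ is converted into the independence of $U$. The one conceptual point worth keeping in mind — and which the authors already flag in the preceding discussion — is that in the grade-three setting the triple $(t_{1}, t_{2}, t_{3})$ may genuinely depend on the choice of $u$; however equations (\ref{tool1}) and (\ref{tool2}) are vertex-wise valid for each $u$ separately, and this is exactly what later arguments will exploit.
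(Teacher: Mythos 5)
Your proposal is correct and follows essentially the same argument as the paper: equation (\ref{tool1}) from the partition of $V(G')$ into $T_{1}, T_{2}, T_{3}$, and equation (\ref{tool2}) by double-counting the edges between $U$ and $V(G')$ after observing that $a = 0$ forces $U$ to be independent. The closing remark about the vertex-dependence of $(t_{1}, t_{2}, t_{3})$ accurately reflects the paper's own caveat.
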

\begin{proof}
Clearly, $n - k - 1 = n' = |V(G')| = |T_{1}| + |T_{2}| + |T_{3}| = t_{1} + t_{2} + t_{3}$ because $V(G')$ is partitioned by $T_{1}, T_{2}, T_{3}$. This proves Equation (\ref{tool1}). To prove Equation (\ref{tool2}), we count the number of edges between $U$ and $V(G')$. Since $a = 0$, the set $U$ is independent. Hence, every vertex in $U$ is adjacent to $k - 1$ vertices in $V(G')$. This implies that there are $k(k - 1)$ edges from $U$ to $V(G')$. On the other hand. Each vertex in $T_{i}$ share $c_{i}$ neighbours with $u$. That is, $deg_{U}(v) = c_{i}$ for all $v \in T_{i}$ and $i \in \{1, 2, 3\}$. Thus, there are $c_{1}t_{1} + c_{2}t_{2} + c_{3}t_{3}$ edges from $V(G')$ to $U$. By double counting, we have $c_{1}t_{1} + c_{2}t_{2} + c_{3}t_{3} = k(k - 1)$ and this proves Equation (\ref{tool2}).
\end{proof}

 In what follows, we separate the proof into two parts. The arguments of upper bound and the characterization of graphs satisfying the bound are given in Subsection \ref{ub}. While those for lower bound part are given in Subsection \ref{lb}.
\vskip 5 pt

\subsection{The Upper Bound}\label{ub}

Recall that $G' = G - N[u], N_{G}(u) = U = \{u_{1}, ..., u_{k}\}$ and $n' = |V(G')|$. We, further, let $U_{i} = N_{G'}(u_{i})$. Because $a = 0$, $U$ is an independent set of $G$. Hence, every vertex in $U$ is adjacent to $k - 1$ vertices in $G'$. Therefore
\begin{center}
$n' \leq k(k - 1)$
\end{center}
\noindent because $|U| = k$ and $c_{3} > 0$. That is
\begin{center}
$n = 1 + k + n' \leq k^{2} + 1$.
\end{center}
\noindent We establish the following Lemmas.
\vskip 5 pt

\begin{lem}\label{c0}
If $k^{2} - 4 \leq n \leq k^{2} + 1$, then $k = 4$.
\end{lem}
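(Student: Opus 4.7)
The plan is to combine the two counting equations with the hypothesis $n \geq k^2-4$ to obtain a polynomial inequality in $k$ that essentially forces $k = 4$.

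First, I would eliminate $t_3$ from the system. Since $c_1 = k-1$, $c_2 = k-2$, $c_3 = k-3$, subtracting $(k-3)$ times Equation~\eqref{tool1} from Equation~\eqref{tool2} gives
\begin{equation}
2t_1 + t_2 \;=\; k(k-1) - (k-3)n'.\notag
\end{equation}
Since $t_1, t_2 \geq 0$, the right-hand side must be nonnegative, yielding the purely arithmetic upper bound
\begin{equation}
n' \;\leq\; \frac{k(k-1)}{k-3}.\notag
\end{equation}

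Next, I would bring in the hypothesis $n \geq k^2 - 4$, which translates to $n' = n - k - 1 \geq k^2 - k - 5$. Combining with the previous display,
\begin{equation}
k^2 - k - 5 \;\leq\; \frac{k(k-1)}{k-3}.\notag
\end{equation}
Multiplying through by the positive quantity $k-3$ (valid since $k \geq 4$) and simplifying produces a cubic inequality of the form $k^3 - 5k^2 - k + 15 \leq 0$.

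Finally, I would verify that this cubic is nonpositive only at $k = 4$ among integers $k \geq 4$: a direct check shows the value is $-5$ at $k=4$ and $10$ at $k=5$, and the derivative $3k^2 - 10k - 1$ is positive for $k \geq 4$, so the cubic is strictly increasing on $[4,\infty)$ and remains positive for every $k \geq 5$. This forces $k = 4$. The main obstacle is really just choosing the right linear combination of \eqref{tool1} and \eqref{tool2} to make $t_3$ drop out cleanly; once the bound $n' \leq k(k-1)/(k-3)$ is in hand, the rest is routine arithmetic.
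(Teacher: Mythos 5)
Your proof is correct, and it takes a genuinely different and much more economical route than the paper's. You work purely with the two counting identities: eliminating $t_3$ gives $2t_1 + t_2 = k(k-1) - (k-3)n'$, nonnegativity of $t_1, t_2$ yields $n' \leq k(k-1)/(k-3)$, and this collides with $n' \geq k^2 - k - 5$ precisely when $k \geq 5$ (your cubic $k^3 - 5k^2 - k + 15$ and its sign analysis check out: it equals $-5$ at $k=4$, equals $10$ at $k=5$, and is increasing on $[4,\infty)$). The paper instead argues structurally: assuming $k \geq 5$, it partitions $V(G')$ into the set $S$ of vertices with at least two neighbours in $U$ and the private-neighbour sets $U_i'$, bounds $|S| \leq l$ where $n' = k^2 - k - l$, and eventually exhibits a vertex sharing exactly one common neighbour with some $u_j$, forcing $c_3 = k-3 = 1$. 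Your argument uses only the lower end $n \geq k^2 - 4$ of the hypothesis, and in fact establishes the considerably stronger bound $n \leq k + 1 + k(k-1)/(k-3)$, which for $k \geq 5$ is at most $2k+6$ and hence far below $k^2 - 4$; this single inequality would shorten the paper's entire upper-bound analysis for $k \geq 5$ to a few lines, leaving only the $k=4$ case (Lemma \ref{c1}) to be handled combinatorially. The only thing the paper's longer argument yields in exchange is structural information about $G'$ that it does not subsequently reuse.
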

\begin{proof}
Assume that $k^{2} - 4 \leq n \leq k^{2} + 1$. Thus, $k^{2} - k - 5 \leq n' \leq k^{2} - k$. We suppose to the contrary that $k \geq 5$. We may let $S \subseteq V(G')$ be the set such that every vertex in $S$ is adjacent to at least two vertices in $U$ and $T \subseteq V(G')$ be the set such that every vertex in $T$ is adjacent to exactly one vertex in $U$, further, we let $U'_{i}$ be the set of private neighbours of $u_{i}$ in $V(G')$ with respect to $U$ for $i \in \{1, ..., k\}$. Clearly, $U'_{i} \subset U_{i}$. Also, it is worth noting that every vertex in $T$ is in exactly one set $U'_{i}$. Because $c_{3} > 0$, it follows that $U'_{1}, ..., U'_{k}, S$ partition $V(G')$ and $\cup^{k}_{i = 1}U'_{i} = T$. Thus, $|T| + |S| = n'$ and $|T| = \sum^{k}_{i = 1}|U'_{i}|$ which imply that
\begin{align}\label{u'}
|T| = \sum^{k}_{i = 1}|U'_{i}| = n' - |S|.
\end{align}
\vskip 5 pt

\indent For any $l \in \{0, 1, ..., 5\}$, we let $n' = k^{2} - k - l$. When $l = 5$, we have that $n = 1 + k + k^{2} - k - l = k^{2} - 4$ which is odd if $k = 5$. Thus, $k \geq 6$ and this implies that $k - 1 \geq l$. When $l \leq 4$, by the assumption that $k \geq 5$, we have $k - 1 \geq 4 \geq l$. In both cases,
\begin{align}\label{kl}
l \leq k - 1.
\end{align}
\noindent We next show that the upper bound of $|S|$ is $l$. It can be observed that, for each $u_{i} \in U$, the vertex $u_{i}$ is adjacent to $k - 1 - |U'_{i}|$ vertices in $S$. Thus, $deg_{S}(u_{i}) = k - 1 - |U'_{i}|$. By Equation (\ref{u'}),
\begin{align}\label{s}
\sum^{k}_{i = 1}deg_{S}(u_{i}) &= k(k - 1) - \sum^{k}_{i = 1}|U'_{i}|\notag\\
                               &= k(k - 1) - (n' - |S|) \notag\\
                               &= k(k - 1) - (k^{2} - k - l - |S|) = l + |S|.
\end{align}
\noindent Let $S = \{s_{1}, ..., s_{|S|}\}$. By the definition of $S$, $s_{i}$ is adjacent to at least two vertices in $U$. Thus $deg_{U}(s_{i}) \geq 2$ and this implies that
\begin{align}\label{l}
\sum^{|S|}_{i = 1}deg_{U}(s_{i}) \geq 2|S|.
\end{align}
\noindent Hence, by Equation (\ref{s}) and double counting, we have
\begin{align}
2|S| \leq \sum^{|S|}_{i = 1}deg_{U}(s_{i}) = \sum^{k}_{i = 1}deg_{S}(u_{i}) = l + |S|\notag
\end{align}
\noindent which implies that
\begin{align}\label{sl}
|S| \leq l.
\end{align}

\indent Since every vertex $s$ in $S$ is adjacent to at least two vertices in $U$, $s$ is adjacent to at most $k - 2$ vertices in $T$. By Equation (\ref{sl}),
\begin{align}\label{ss}
\sum_{s \in S}deg_{T}(s) \leq |S|(k - 2) \leq l(k - 2).
\end{align}
\noindent By Equation (\ref{kl}), we have $kl \leq k(k - 1)$ which implies that $l(k - 2) + 2l \leq k(k - 1)$. Thus, by Equation (\ref{sl}), we have
\begin{align}\label{kl2}
l(k - 2) \leq k(k - 1) - 2l \leq k(k - 1) - l - |S| = n' - |S| = |T|.
\end{align}
\noindent Therefore, by Equations (\ref{ss}) and (\ref{kl2}), we have
\begin{align}\label{st}
\sum_{s \in S}deg_{T}(s) \leq |T|.
\end{align}
\noindent If $\sum_{s \in S}deg_{T}(s) < |T|$, then there exists a vertex $v \in U'_{i}$ for some $i \in \{1, ..., k\}$ which is not adjacent to any vertex in $S$. Since $c_{3} > 0$ and $deg_{G'}(v) = k - 1$, $v$ is adjacent to exactly one vertex in each $U'_{j}$ for $j \in \{1, ..., k\} \setminus \{i\}$. Thus, $|N(v) \cap N(u_{j})| = 1$. But $c_{3} = k - 3$ is minimum among $c_{1}, c_{2}, c_{3}$, it follows that $|N(v) \cap N(u_{j})| = 1 = k - 3 = c_{3}$ which implies that $k = 4$ contradicting the assumption. Thus, we may assume the the equality in (\ref{st}) holds. This implies that Equations (\ref{kl}), (\ref{l}) and (\ref{sl}) holds. The equalities of (\ref{kl}) and (\ref{sl}) imply that $|S| = l = k - 1$. Also, the equality of (\ref{l}) implies that every vertex in $S$ is adjacent to exactly two vertices in $U$ and has $k - 2$ private neighbours in $T$ with respect to $S$. Thus, the equality of (\ref{st}) implies that every vertex in $T$ is adjacent to exactly one vertex in $S$. Therefore, we can let $x$ be a vertex in $U'_{i}$ such that $x$ is adjacent to exactly one vertex $s \in S$ which $s$ is adjacent to exactly two vertices $u_{j}$ and $u_{j'}$ in $U$. Because $a = 0$, $i \notin \{j, j'\}$. As $c_{3} > 0$, we have that $x$ shares at least one common neighbour with $u_{i'}$ for all $i' \in I = \{1, ..., k\} \setminus \{i, j, j'\}$. Since $deg_{G'}(x) = k - 1$ and $xs \in E(G)$, it follows that $x$ is adjacent to at most $k - 2$ vertices in $\cup_{i' \in I}U_{i'}$. Because $k \geq 5$, it follows that there exists $i'' \in I$ such that $|N(x) \cap N(u_{i''})| = 1$. Hence, $|N(x) \cap N(u_{i''})| = 1 = k - 3 = c_{3}$ which implies that $k = 4$ contradicting the assumption. Therefore, $k = 4$.
\end{proof}
%%%%%%%%%%%%%%%%%%%%%%%%%%%%%%%%%%%%%%%%%%%%%%%%%%%%%%%%%%%%%%%%%%%%%%%%%%%%%%%%%%%%%%%%%%%%%%%%%%%%%%%%
%\vskip 5 pt

%\indent By Claim \ref{c0}, we may assume in the following that if $k^{2} - 4 \leq n \leq k^{2} + 1$, then $k = 4$.
%\vskip 5 pt

\begin{lem}\label{c1}
If $k = 4$, then $n \leq k^{2} - 4$.
\end{lem}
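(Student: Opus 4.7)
Setting $k = 4$ in the preceding proposition yields $t_1(u) + t_2(u) + t_3(u) = n - 5$ and $3t_1(u) + 2t_2(u) + t_3(u) = 12$ for every vertex $u$, hence $2t_1(u) + t_2(u) = 17 - n \geq 0$, so $n \leq 17$. The plan is to rule out $n \in \{17, 16, 15, 14, 13\}$ in turn.

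The cases $n = 17$ and $n = 16$ fall immediately from the grade-$3$ hypothesis. If $n = 17$ then every vertex has $t_1(u) = t_2(u) = 0$, so no non-adjacent pair realises $c_1 = 3$ or $c_2 = 2$; if $n = 16$ then every vertex has $(t_1(u), t_2(u), t_3(u)) = (0, 1, 10)$, so $c_1 = 3$ is never realised. Both contradict grade $3$.

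For $n \in \{13, 14, 15\}$, grade $3$ supplies a vertex $u$ with $t_1(u) \geq 1$; fix such $u$ and $v \in T_1(u)$, and relabel $U = \{u_1, u_2, u_3, u_4\}$ so that $N(v) \cap U = \{u_1, u_2, u_3\}$. Triangle-freeness ($a = 0$) forbids the fourth neighbour $v'$ of $v$ from being adjacent to any of $u_1, u_2, u_3$, pinning $v' \in PN_{T_3(u)}(u_4, U)$. The key observation is that $|PN_{T_3(u)}(u_4, U)| \leq 1$: otherwise, choose $v'' \in PN_{T_3(u)}(u_4, U) \setminus \{v'\}$; triangle-freeness applied at $u_4$ forces $v' \not\sim v''$, and then $N(v) \cap N(v'') \subseteq \{u_1, u_2, u_3, v'\} \cap N(v'') = \emptyset$ (because the only $U$-neighbour of $v''$ is $u_4$), contradicting $|N(v) \cap N(v'')| \geq c_3 = 1$ for the non-adjacent pair $\{v, v''\}$. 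Since $u_4$ has $k - 1 = 3$ neighbours in $G'$ and is non-adjacent to $v$, the observation forces at least two of $u_4$'s $G'$-neighbours to lie in $T_1(u) \cup T_2(u) \setminus \{v\}$.

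This last requirement fails in almost every case: for $n = 15$ with partition $(1, 0, 9)$ the set $T_1(u) \cup T_2(u) \setminus \{v\}$ is empty; for $n = 14$ with $(1, 1, 7)$ it has size $1$; and for $n = 13$ with $(2, 0, 6)$, after relabelling $u_4$ to be the $U$-vertex missing from $N(v)$ for a chosen $v \in T_1(u)$, it again has size at most $1$. The observation accordingly dispatches each of these. The only surviving configuration is $n = 13$ of type $(1, 2, 5)$ in which $u_4$ is adjacent to both vertices of $T_2(u) = \{w_1, w_2\}$; here $PN_{T_3(u)}(u_4, U) = \{v'\}$, the observation does not apply, and this is the main obstacle. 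The plan is to split further into sub-cases according to whether $w_1, w_2$ share their second $U$-neighbour, writing $N(w_j) \cap U = \{u_{a_j}, u_4\}$; triangle-freeness pins down the forbidden adjacencies among $\{w_1, w_2, v'\}$, reducing the unknown edges to a small bipartite structure inside $T_3(u) \setminus \{v'\}$, a set of four vertices each carrying three $G'$-incidences. A careful edge count, using that every such $x$ must share at least one common neighbour with each of $u_1, u_2, u_3, u_4, w_1, w_2$ while every adjacent pair shares none, produces in each sub-case either a non-adjacent pair with zero common neighbours or an adjacent pair with a common neighbour, yielding the final contradiction.
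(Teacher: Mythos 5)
Your argument is organized differently from the paper's (which fixes a non-adjacent pair $x,y$ realising $c_1=3$ and analyses the seven remaining vertices), and the executed portion is correct and quite clean: the identity $2t_1(u)+t_2(u)=17-n$ disposes of $n\in\{16,17\}$ via the grade hypothesis, and the observation that $u_4$ has at most one private neighbour in $G'$ with respect to $U$ (hence at least two neighbours in $(T_1(u)\cup T_2(u))\setminus\{v\}$) correctly kills $n=15$, $n=14$, and the $(2,0,6)$ case of $n=13$. I verified each of these steps.

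However, there is a genuine gap: the case $n=13$ with $(t_1,t_2,t_3)=(1,2,5)$ is not proved, only announced. You write that ``a careful edge count \ldots\ produces in each sub-case either a non-adjacent pair with zero common neighbours or an adjacent pair with a common neighbour,'' but this is precisely the hard part, and it is more involved than the phrase suggests. One must first pin down the whole adjacency structure of $G'$: with $N_{G'}(u_4)=\{v',w_1,w_2\}$ forced, the set $P=T_3(u)\setminus\{v'\}$ has four vertices carrying exactly three internal edges, $v'$ is joined to exactly two of them, and the placement of the $w_j$--$P$ edges splits into (at least) two structural cases according to whether the two non-neighbours of $v'$ in $P$ are adjacent. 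Only after that does the assignment of the single $U$-neighbour of each vertex of $\{w_1,w_2\}\cup P$ reduce to choosing a perfect matching avoiding a list of forbidden pairs, and there are five surviving matchings across the two structural cases, each of which must be eliminated by exhibiting a specific non-adjacent pair with empty common neighbourhood (e.g.\ in one case the vertex $u_j$ adjacent to $\{w_1,w_2\}$ shares no neighbour with a vertex of $P$ at distance two from both $w_1$ and $w_2$ in $G'$). The conclusion is true and your outline can be completed along these lines, but as written the decisive case rests on an unverified assertion, so the proposal is not yet a proof. By contrast, the paper's treatment of $n=13$ (via the forced private-neighbour structure of $Z=N(x)\cap N(y)$ and a degree count in the induced six-vertex subgraph) is carried out in full.
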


\begin{proof}
Suppose first that $n \geq k^{2} - 2$. Thus, $n \geq 14$. Since $c_{1} = 3$, we may let $x$ and $y$ be a pair of non-adjacent vertices of $G$ such that $N(x) \cap N(y) = \{z_{1}, z_{2}, z_{3}\} = Z$. Further, we let $N(x) \setminus Z = \{x_{1}\}$ and $N(y) \setminus Z = \{y_{1}\}$. Clearly, $N(x) = \{x_{1}\} \cup Z$ and $N(y) = \{y_{1}\} \cup Z$ are independent sets because $a = 0$. Thus, to share a common neighbour between $x_{1}$ and $y$, $x_{1}y_{1} \in E(G)$. Let $H$ be the subgraph of $G$ induced by $V(G) \setminus (\{x, y, x_{1}, y_{1}\} \cup Z)$. By the assumption, $|V(H)| \geq 7$. Since all $z_{1}, z_{2}, z_{3}$ are adjacent to both $x$ and $y$ and $k = 4$, it follows that $|N_{H}(Z)| \leq 6$. So, there exists a vertex $p$ of $H$ which is not adjacent to any vertex in $Z$. To share a common neighbour with $x$ and $y$, we have $px_{1}, py_{1} \in E(G)$. Hence, $x_{1}x_{2}, px_{1}, py_{1} \in E(G)$ contradicting $a = 0$.
\vskip 5 pt

\indent So, we may assume that $n = k^{2} - 3$. Thus, $|V(H)| = 6$. Similarly, if $|N_{H}(Z)| \leq 5$, then there exists a vertex $p \in V(H)$ which is adjacent to both $x_{1}$ and $y_{1}$ in order to share a common neighbour with $x$ and $y$, respectively. This yields a contradiction. Thus, $|N_{H}(Z)| = 6$ which implies that each $z_{i}$ has exactly two private neighbours in $H$ with respect to $Z$. We let $PN_{H}(z_{i}, Z) = \{z'_{i}, z''_{i}\}$ for all $1 \leq i \leq 3$. The graph $G$ now is illustrated by Figure \ref{F}.

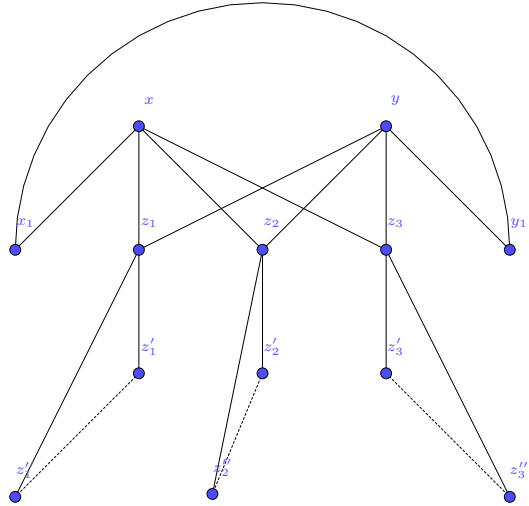
\begin{figure}[htb]
\centering
\definecolor{ududff}{rgb}{0.30196078431372547,0.30196078431372547,1}
\resizebox{0.5\textwidth}{!}{%
\begin{tikzpicture}[line cap=round,line join=round,>=triangle 45,x=1cm,y=1cm]
%\clip(-14.09527141918012,-9.999803940181332) rectangle (8.657775166761132,4.82535184926472);
\draw [line width=0.4pt] (-8,2)-- (-10,0);
\draw [line width=0.4pt] (-8,2)-- (-8,0);
\draw [line width=0.4pt] (-8,2)-- (-6,0);
\draw [line width=0.4pt] (-8,2)-- (-4,0);
\draw [line width=0.4pt] (-4,2)-- (-8,0);
\draw [line width=0.4pt] (-4,2)-- (-6,0);
\draw [line width=0.4pt] (-4,2)-- (-4,0);
\draw [line width=0.4pt] (-4,2)-- (-2,0);
\draw [line width=0.4pt] (-8,0)-- (-8,-2);
\draw [line width=0.4pt] (-8,0)-- (-10,-4);
\draw [line width=0.4pt] (-6,0)-- (-6,-2);
\draw [line width=0.4pt] (-6,0)-- (-6.810074909829786,-3.9528746041990757);
\draw [line width=0.4pt] (-4,0)-- (-4,-2);
\draw [line width=0.4pt] (-4,0)-- (-2,-4);
\draw [line width=0.4pt,dash pattern=on 1pt off 1pt] (-8,-2)-- (-10,-4);
\draw [line width=0.4pt,dash pattern=on 1pt off 1pt] (-6,-2)-- (-6.810074909829786,-3.9528746041990757);
\draw [line width=0.4pt,dash pattern=on 1pt off 1pt] (-4,-2)-- (-2,-4);
\draw [shift={(-6,0)},line width=0.4pt]  plot[domain=0:3.141592653589793,variable=\t]({1*4*cos(\t r)+0*4*sin(\t r)},{0*4*cos(\t r)+1*4*sin(\t r)});
\begin{scriptsize}
\draw [fill=ududff] (-8,2) circle (2.5pt);
\draw[color=ududff] (-7.8322376899489035,2.4172550198292986) node {$x$};
\draw [fill=ududff] (-10,0) circle (2.5pt);
\draw[color=ududff] (-9.834030116063945,0.4352823207054949) node {$x_1$};
\draw [fill=ududff] (-8,0) circle (2.5pt);
\draw[color=ududff] (-7.8322376899489035,0.4352823207054949) node {$z_1$};
\draw [fill=ududff] (-6,0) circle (2.5pt);
\draw[color=ududff] (-5.850264990825101,0.4352823207054949) node {$z_2$};
\draw [fill=ududff] (-4,0) circle (2.5pt);
\draw[color=ududff] (-3.8484725647100615,0.4352823207054949) node {$z_3$};
\draw [fill=ududff] (-4,2) circle (2.5pt);
\draw[color=ududff] (-3.8484725647100615,2.4172550198292986) node {$y$};
\draw [fill=ududff] (-2,0) circle (2.5pt);
\draw[color=ududff] (-1.8466801385950207,0.4352823207054949) node {$y_1$};
\draw [fill=ududff] (-8,-2) circle (2.5pt);
\draw[color=ududff] (-7.8322376899489035,-1.566510105409547) node {$z_1'$};
\draw [fill=ududff] (-6,-2) circle (2.5pt);
\draw[color=ududff] (-5.850264990825101,-1.566510105409547) node {$z_2'$};
\draw [fill=ududff] (-4,-2) circle (2.5pt);
\draw[color=ududff] (-3.8484725647100615,-1.566510105409547) node {$z_3'$};
\draw [fill=ududff] (-10,-4) circle (2.5pt);
\draw[color=ududff] (-9.834030116063945,-3.5683025315245893) node {$z_1''$};
\draw [fill=ududff] (-2,-4) circle (2.5pt);
\draw[color=ududff] (-1.8466801385950207,-3.5683025315245893) node {$z_3''$};
\draw [fill=ududff] (-6.810074909829786,-3.9528746041990757) circle (2.5pt);
\draw[color=ududff] (-6.643054070474623,-3.528663077542113) node {$z_2''$};
\end{scriptsize}
\end{tikzpicture}}
\vskip -0.25 cm
\caption{A subgraph of $G$.}
\label{F}
\end{figure}

%\vskip 10 pt

\noindent By $a = 0$, $\{z'_{i}, z''_{i}\}$ is independent. To share a common neighbour from $z'_{1}$ to both of $z_{2}$ and $z_{3}$, we have that $z'_{1}$ is adjacent to a vertex in $\{z'_{2}, z''_{2}\}$ and a vertex in $\{z'_{3}, z''_{3}\}$. Renaming vertices if necessary, we assume that $z'_{1}z'_{2}, z'_{1}z'_{3} \in E(G)$. Because $a = 0$, $z'_{2}z'_{3} \notin E(G)$. Thus to share a common neighbour between $z'_{2}$ and $z_{3}$ and between $z'_{3}$ and $z_{2}$, we have that $z'_{2}z''_{3}, z'_{3}z''_{2} \in E(G)$. Because $a = 0$, $z'_{1}z''_{2}, z'_{1}z''_{3} \notin E(G)$. Therefore, to share a common neighbour with $z_{1}$, we have that $z''_{2}z''_{1}, z''_{3}z''_{1} \in E(G)$. Thus $z''_{2}z''_{3} \notin E(G)$. The induced subgraph $H$ of $G$ is illustrated by Figure \ref{F1}.

\begin{figure}[htb]
\centering
\definecolor{ududff}{rgb}{0.30196078431372547,0.30196078431372547,1}
\resizebox{0.25\textwidth}{!}{%
\begin{tikzpicture}[line cap=round,line join=round,>=triangle 45,x=1cm,y=1cm]
%\clip(-4.623956308806709,-5.09770291555933) rectangle (16.366033182924955,8.14050293588692);
\draw [line width=0.4pt] (1,3)-- (3,3);
\draw [line width=0.4pt] (3,3)-- (5,1);
\draw [line width=0.4pt] (3,1)-- (5,3);
\draw [line width=0.4pt] (1,1)-- (3,1);
\draw [shift={(3,3)},line width=0.4pt]  plot[domain=0:3.141592653589793,variable=\t]({1*2*cos(\t r)+0*2*sin(\t r)},{0*2*cos(\t r)+1*2*sin(\t r)});
\draw [shift={(3,1)},line width=0.4pt]  plot[domain=3.141592653589793:6.283185307179586,variable=\t]({1*2*cos(\t r)+0*2*sin(\t r)},{0*2*cos(\t r)+1*2*sin(\t r)});
\draw [line width=0.4pt,dash pattern=on 1pt off 1pt] (1,3)-- (1,1);
\draw [line width=0.4pt,dash pattern=on 1pt off 1pt] (3,3)-- (3,1);
\draw [line width=0.4pt,dash pattern=on 1pt off 1pt] (5,3)-- (5,1);
\draw [line width=0.4pt,dash pattern=on 1pt off 1pt] (1,3)-- (3,1);
\draw [line width=0.4pt,dash pattern=on 1pt off 1pt] (1,3)-- (5,1);
\draw [line width=0.4pt,dash pattern=on 1pt off 1pt] (1,1)-- (3,3);
\draw [line width=0.4pt,dash pattern=on 1pt off 1pt] (1,1)-- (5,3);
\draw [line width=0.4pt,dash pattern=on 1pt off 1pt] (3,3)-- (5,3);
\draw [line width=0.4pt,dash pattern=on 1pt off 1pt] (3,1)-- (5,1);
\begin{scriptsize}
\draw [fill=ududff] (1,3) circle (2.5pt);
\draw[color=ududff] (1.1456360809947437,3.3885533755749973) node {$z_1'$};
\draw [fill=ududff] (3,3) circle (2.5pt);
\draw[color=ududff] (3.1455254676437137,3.3885533755749973) node {$z_2'$};
\draw [fill=ududff] (5,3) circle (2.5pt);
\draw[color=ududff] (5.145414854292683,3.3885533755749973) node {$z_3'$};
\draw [fill=ududff] (1,1) circle (2.5pt);
\draw[color=ududff] (1.1456360809947437,1.388663988926032) node {$z_1''$};
\draw [fill=ududff] (3,1) circle (2.5pt);
\draw[color=ududff] (3.1455254676437137,1.388663988926032) node {$z_2''$};
\draw [fill=ududff] (5,1) circle (2.5pt);
\draw[color=ududff] (5.145414854292683,1.388663988926032) node {$z_3''$};
\end{scriptsize}
\end{tikzpicture}}
\vskip -0.25 cm
\caption{The subgraph of $G$ induced by $H$.}
\label{F1}
\end{figure}
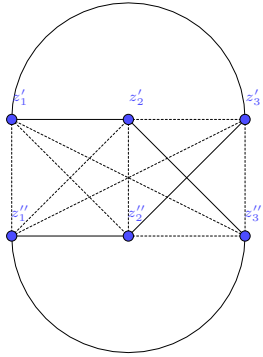

\noindent Since $x_{1}y_{1}, x_{1}x, y_{1}y \in E(G)$ and $k = 4$, it follows that each of $x_{1}$ and $y_{1}$ is adjacent to exactly two vertices in $H$. Thus, there is at least one vertex in $H$ has degree at most $3$ in $G$ contradicting $k = 4$. Therefore, $n \leq k^{2} - 4$.
\end{proof}

%%%%%%%%%%%%%%%%%%%%%%%%%%%%%%%%%%%%%%%%%%%%%%%%%%%%%%%%%%%%%%%%%%%%%%%%%%%%%%%%%%%%%%%%%%%%%%%%%%%%%%%%

By Lammas \ref{c0} and \ref{c1}, we can conclude that if $G$ is an $SQSR(n, k, 0;, k - 1, k - 2, k - 3)$ graph, then $n \leq k^{2} - 4$. This establishes the upper bound.
\vskip 5 pt

Now, we characterize all graphs whose $n$ satisfies the upper bound. Assume that $n = k^{2} - 4$. By Lemma \ref{c0}, we have that $k = 4$ and this implies that $n = 12$. Thus, to characterize all graphs which are $SQSR(k^{2} - 4, k, 0; k - 1, k - 2, k - 3)$, it suffices to characterize the graphs $SQSR(12, 4, 0; 3, 2, 1)$. Let $G$ be a graph $SQSR(12, 4, 0; 3, 2, 1)$. Recall that $x$ and $y$ are non-adjacent vertices of $G$ such that $N(x) \cap N(y) = \{z_{1}, z_{2}, z_{3}\} = Z$ and $N(x) \setminus Z = \{x_{1}\}, N(y) \setminus Z = \{y_{1}\}$. To share a common neighbour between $x_{1}$ and $y$, we have that $x_{1}y_{1} \in E(G)$. We, further, recall that $H$ is the subgraph of $G$ induced by $V(G) \setminus (\{x, y, x_{1}, y_{1}\} \cup Z)$. Thus, $|V(H)| = 5$ and we may let $V(H) = \{a_{1}, ..., a_{5}\}$. Because $x_{1}y_{1} \in E(G)$ and $a = 0$, it follows that $N_{H}(x_{1}) \cap N_{H}(y_{1}) = \emptyset$. Renaming vertices if necessary, we let
\begin{center}
$N_{H}(x_{1}) = \{a_{1}, a_{2}\}$ and $N_{H}(y_{1}) = \{a_{4}, a_{5}\}$.
\end{center}
\noindent Thus, $a_{1}a_{2}, a_{4}a_{5} \notin E(G)$ since $a = 0$. We have the following Lemma.
\vskip 5 pt

\begin{lem}\label{c2}
Under the above setting,
if there exists $i \in \{1, 2, 3\}$ such that $|N_{H}(z_{i}) \cap N_{H}(x_{1})| = 1$ and $|N_{H}(z_{i}) \cap N_{H}(y_{1})| = 1$, then $G$ is isomorphic to $G_{2}$.
\end{lem}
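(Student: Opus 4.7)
The plan is first to identify and relabel so that the distinguished vertex in the hypothesis is $z_1$, and then to reconstruct $G$ up to isomorphism from the $c$-value constraints together with triangle-freeness. Because $a=0$, both $N(x)$ and $N(y)$ are independent, so for every $j\in\{1,2,3\}$ the two neighbours of $z_j$ outside $\{x,y\}$ lie in $V(H)$; set $S_j:=N_H(z_j)$, which has size $2$. The hypothesis gives $|S_1\cap\{a_1,a_2\}|=|S_1\cap\{a_4,a_5\}|=1$, so after possibly swapping $a_1\leftrightarrow a_2$ and $a_4\leftrightarrow a_5$ we may assume $S_1=\{a_1,a_4\}$.

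Next, one analyses the four bipartite edges between $\{z_2,z_3\}$ and $V(H)$. Since $c_3=1$, each of $a_2,a_3,a_5$ must share at least one common neighbour with $x$ and with $y$; but $x_1,y_1,z_1$ cannot serve as such a common neighbour for any of these three vertices (inspection of $N_H(x_1)$, $N_H(y_1)$ and $S_1$), so each of $a_2,a_3,a_5$ is adjacent to at least one of $z_2,z_3$. This uses at least three of the four edges, so at most one edge runs from $\{z_2,z_3\}$ to $\{a_1,a_4\}$. Combined with $|S_2\cap S_3|\le 1$ (forced by $|N(z_2)\cap N(z_3)|\in\{1,2,3\}$ together with the shared neighbours $x,y$), this leaves a short list of possible configurations for $(S_2,S_3)$: either $S_2,S_3$ are disjoint with $S_2\cup S_3=\{p,a_2,a_3,a_5\}$ for some $p\in\{a_1,a_4\}$, or $|S_2\cap S_3|=1$ and $S_2\cup S_3=\{a_2,a_3,a_5\}$.

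For each such subcase, every edge of $G[V(H)]$ can then be determined as follows: the external degree of each $a_i$ is read off from $S_1,S_2,S_3$, fixing its degree inside $H$; every pair in $V(H)$ sharing an external neighbour must be a non-edge (by $a=0$); and the remaining internal edges are pinned down by the resulting degree count together with one more pass of the $c$-value constraints. Every configuration other than $\{S_2,S_3\}=\{\{a_2,a_3\},\{a_3,a_5\}\}$ (up to swapping $z_2$ and $z_3$) produces either a triangle in $H$ or a pair of non-adjacent vertices sharing no common neighbour, contradicting $a=0$ or $c_3=1$. In the surviving configuration the five forced non-edges $a_1a_2$, $a_4a_5$, $a_1a_4$, $a_2a_3$, $a_3a_5$ of $H$ form a $5$-cycle, so the other five pairs in $H$ are edges and $G[V(H)]$ is the complementary $5$-cycle $a_1a_3a_4a_2a_5a_1$. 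A direct vertex-by-vertex matching against Figure \ref{g2} then yields the isomorphism $G\cong G_2$.

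The main obstacle will be the middle case analysis: every unwanted configuration must be eliminated by pinpointing the specific pair of vertices that is forced into a triangle or loses its $c$-constraint, and the bookkeeping benefits from exploiting the $z_2\leftrightarrow z_3$ and $x\leftrightarrow y$ symmetries to reduce duplication.
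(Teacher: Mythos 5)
Your proposal is correct and reaches the same conclusion by the same underlying strategy---reconstructing $G$ edge by edge from triangle-freeness ($a=0$) and the common-neighbour requirements---but you organise the case analysis quite differently. You first enumerate all admissible distributions of the four edges joining the two non-distinguished vertices of $Z$ to $V(H)$ and refute each bad configuration separately. The paper instead derives the two edges $a_{2}a_{4}$ and $a_{1}a_{5}$ of $H$ immediately (each of $a_{2},a_{5}$ must share a neighbour with the distinguished $z_{i}$, and the alternatives are excluded by $a=0$), and then makes the single pivotal observation that $a_{3}$, the vertex of $H$ adjacent to none of $x_{1},y_{1}$ or that $z_{i}$, must be adjacent to \emph{both} remaining vertices of $Z$: otherwise $\deg(a_{3})=4$ forces three neighbours inside $\{a_{1},a_{2},a_{4},a_{5}\}$, and every such triple contains one of the two derived edges, producing a triangle. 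That one step eliminates all of your bad configurations simultaneously, since in each of them $a_{3}$ has only one neighbour in $Z$; the proof then collapses to your surviving configuration plus a symmetry. If you carry out your enumeration as written, be aware that at least one configuration (e.g.\ $S_{2}=\{a_{1},a_{5}\}$, $S_{3}=\{a_{2},a_{3}\}$ in your labelling) completes to a triangle-free candidate for $G[V(H)]$ and is only refuted by a missing common neighbour between a vertex of $Z$ and a vertex of $H$ (here $z_{1}$ and $a_{5}$); your phrasing anticipates this, but it is the easiest case to overlook, and deriving $a_{2}a_{4}$ and $a_{1}a_{5}$ up front, as the paper does, makes every bad case fail by a triangle.
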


\begin{proof}
 Without loss of generality, we may assume that $i = 2$ which $N_{H}(z_{2}) \cap N_{H}(x_{1}) = \{a_{2}\}$ and $N_{H}(z_{2}) \cap N_{H}(y_{1}) = \{a_{4}\}$. Since $a = 0$, $a_{2}a_{4} \notin E(G)$. To share a common neighbour between $a_{5}$ and $z_{2}$, we have $a_{5}a_{2} \in E(G)$. Similarly, to share a common neighbour between $a_{1}$ and $z_{2}$, we have $a_{1}a_{4} \in E(G)$.
\vskip 5 pt

\indent We have that $a_{3}$ is adjacent to $z_{1}$ or $z_{3}$ in order to share a common neighbour with $x$ and $y$. Without loss of generality, we let $a_{3}z_{1} \in E(G)$. If $a_{3}z_{3} \notin E(G)$, then $a_{3}$ is adjacent to three vertices of the set $\{a_{1}, a_{2}, a_{4}, a_{5}\}$ since $deg(a_{3}) = k = 4$. This implies that $G[N(a_{3})]$ contains an edge $a_{1}a_{4}$ or $a_{2}a_{5}$ contradicting $a= 0$.
\vskip 5 pt

\indent Hence, $a_{3}z_{3} \in E(G)$. To share a common neighbour between $a_{1}$ and $y$ and between $a_{5}$ and $x$, we have that either $a_{1}z_{1}, a_{5}z_{3} \in E(G)$ or $a_{1}z_{3}, a_{5}z_{1} \in E(G)$. It can be seen that these two cases are symmetric. Thus, without loss of generality, we let $a_{1}z_{1}, a_{5}z_{3} \in E(G)$. To share a common neighbour between $a_{3}$ and $z_{2}$, we have that $a_{3}a_{2} \in E(G)$ or $a_{3}a_{4} \in E(G)$. Further, because $deg(a_{2}) = deg(a_{4}) = k = 4$, it follows that $a_{3}a_{2}, a_{3}a_{4} \in E(G)$. Finally, since $deg(a_{1}) = deg(a_{5}) = k = 4$, we obtain $a_{1}a_{5} \in E(G)$. Therefore, $G$ is isomorphic to $G_{2}$.
\end{proof}

\indent We also establish the following Lemma.
\vskip 5 pt

\begin{lem}\label{c3}
For all $i \in \{1, ..., 5\}$, each $a_{i}$ is adjacent to at least one vertex in $Z$.
\end{lem}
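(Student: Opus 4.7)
The plan is to proceed by contradiction: suppose some $a_i$ has no neighbour in $Z$, and derive an impossible placement of $a_i$ inside $V(H)$.

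First I would record the easy observation that $a_i$ is non-adjacent to $x$. Indeed, $N(x) = \{x_1\} \cup Z$ and $a_i \in V(H)$, which by the definition of $H$ excludes $\{x, y, x_1, y_1\} \cup Z$; hence $a_i \notin N[x]$. Because $G$ is an $SQSR(12, 4, 0; 3, 2, 1)$ and all three values $c_1 = 3, c_2 = 2, c_3 = 1$ are positive, the pair $(a_i, x)$ must share a common neighbour in $G$. That common neighbour lies in $N(x) = \{x_1\} \cup Z$; since by assumption $a_i$ has no neighbour in $Z$, it must be $x_1$. Therefore $a_i x_1 \in E(G)$, i.e., $a_i \in N_H(x_1)$.

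Next I would repeat the argument verbatim with $y$ and $y_1$ in place of $x$ and $x_1$: $a_i$ is non-adjacent to $y$, the two must share a common neighbour in $N(y) = \{y_1\} \cup Z$, and since $a_i$ has no neighbour in $Z$ this forces $a_i \in N_H(y_1)$.

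The contradiction is then immediate from the setup preceding the lemma, namely $N_H(x_1) = \{a_1, a_2\}$ and $N_H(y_1) = \{a_4, a_5\}$, which are disjoint subsets of $V(H)$. So $a_i$ cannot belong to both, and the assumption that $a_i$ has no neighbour in $Z$ is untenable. There is essentially no technical obstacle here; the only substantive ingredient is the diameter-two property, which for this SQSR follows automatically from $c_1, c_2, c_3 > 0$, and the disjointness $N_H(x_1) \cap N_H(y_1) = \emptyset$ that was established from $a = 0$ together with $x_1 y_1 \in E(G)$.
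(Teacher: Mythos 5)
Your proof is correct and is exactly the argument the paper compresses into one sentence: a common neighbour of $a_i$ with $x$ (resp.\ $y$) must lie in $\{x_1\}\cup Z$ (resp.\ $\{y_1\}\cup Z$), and avoiding $Z$ would force $a_i\in N_H(x_1)\cap N_H(y_1)=\emptyset$. Same approach, just spelled out in more detail than the paper's terse version.
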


\begin{proof}
To share a common neighbour to both of $x$ and $y$, each $a_{i}$ is adjacent to at least one vertex in $Z$.
\end{proof}

\indent Clearly, $\sum^{3}_{i = 1}deg_{H}(z_{i}) = 6$. Since $|V(H)| = 5$, by Lemma \ref{c3}, it follows that there exists $i \in \{1, ..., ,5\}$ such that $a_{i}$ is adjacent to exactly two vertices in $Z$ and $a_{j}$ is adjacent to exactly one vertex in $Z$ for all $j \in \{1, ..., 5\} \setminus \{i\}$. We may distinguish two cases.
\vskip 5 pt

\noindent \textbf{Case 1:} $i \in \{1, 2, 4, 5\}$.
\vskip 5 pt

\noindent Without loss of generality, we let $i = 1$ and $a_{1}$ is adjacent to $z_{1}$ and $z_{2}$. If $z_{1}$ is adjacent to $a_{4}$ or $a_{5}$, then $|N_{H}(x_{1}) \cap N_{H}(z_{1})| = 1$ and $|N_{H}(y_{1}) \cap N_{H}(z_{1})| = 1$. By Lemma \ref{c2}, the graph $G$ is isomorphic to $G_{2}$. Note that the similar arguments work for $z_2$. Next, we assume otherwise that $z_{1}a_{4}, z_{1}a_{5} \notin E(G)$ and $z_{2}a_{4}, z_{2}a_{5} \notin E(G)$. Thus, $a_{4}z_{3}, a_{5}z_{3} \in E(G)$ by Lemma \ref{c3}. Further, $a_{2}z_{i}, a_{3}z_{j} \in E(G)$ where $\{i, j\} = \{1, 2\}$ by Lemma \ref{c3}. Thus, $a_{1}a_{3} \notin E(G)$ since $a = 0$. So, $a_{3}a_{2}, a_{3}a_{4}, a_{3}a_{5} \in E(G)$ because $deg(a_{3}) = k = 4$. Since $a = 0$, $a_{2}a_{4}, a_{2}a_{5} \notin E(G)$. This implies that $deg(a_{2}) \leq 3$ which is a contradiction.
\vskip 5 pt

\noindent \textbf{Case 2:} $i = 3$.
\vskip 5 pt

\noindent Without loss of generality, we let $a_{3}z_{1}, a_{3}z_{3} \in E(G)$. If the conditions in Lemma \ref{c2} holds, we are done. Thus, it remains to consider when $N_{H}(z_{2}) = \{a_{4}, a_{5}\}$ or $N_{H}(z_{2}) = \{a_{1}, a_{2}\}$. Due to the symmetry, we may assume that $N_{H}(z_{2}) = \{a_{4}, a_{5}\}$. By Lemma \ref{c3}, we have $a_{1}z_{1}, a_{2}z_{3} \in E(G)$ or $a_{1}z_{3}, a_{2}z_{1} \in E(G)$. Renaming vertices if necessary, we let $a_{1}z_{1}, a_{2}z_{3} \in E(G)$. Thus, $a_{1}a_{2}, a_{2}a_{3}, a_{1}a_{3} \notin E(G)$ because $a = 0$. Hence, $a_{3}a_{4}, a_{3}a_{5} \in E(G)$ as $deg(a_{3}) = 4$. So, there are at most two edges from $a_{4}, a_{5}$ to $a_{1}, a_{2}$. This yields $deg(a_{i}) \leq 3$ for some $i \in \{1, 2\}$ contradicting $k = 4$.

 Therefore, from Case 1 and Case 2, we can conclude that if $G$ satisfies the upper bound, then $G$ is $G_{2}$. These complete the subsection.

%%%%%%%%%%%%%%%%%%%%%%%%%%%%%%%%%%%%%%%%%%%%%%%%%%%%%%%%%%%%%%%%%%%%%%%%%%%%%%%%%%%%%%%%%%%%%%%%%%%%%%%%

\subsection{The Lower Bound}\label{lb}
\indent Recall again that $G' = G - N[u], N(u) = U = \{u_{1}, ..., u_{k}\}, n' = |V(G')|$ and $U_{i} = N_{G'}(u_{i})$. The following lemma is obvious under the conditions that $a = 0$ and $G$ is $k$-regular. So, we may state without the proof.
\vskip 5 pt

\begin{lem}\label{b1}
There are $k(k - 1)$ edges between $G[U]$ and $G'$.
\end{lem}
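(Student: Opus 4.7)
The plan is a direct double-counting argument that exploits the hypothesis $a=0$. First I would observe that $U = N_G(u) = \{u_1,\dots,u_k\}$ is an independent set in $G$. Indeed, if $u_iu_j \in E(G)$ for some $i \neq j$, then $u$ would be a common neighbour of the adjacent pair $u_i, u_j$, contradicting $a=0$ (adjacent vertices share no common neighbour). Hence $G[U]$ has no edges at all, and every neighbour of $u_i$ other than $u$ itself lies outside $U$.

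Next I would count from the $U$-side. Each $u_i \in U$ is $k$-regular in $G$, and among its $k$ neighbours, exactly one is $u$ (since $u_i \in N(u)$) and none lie in $U \setminus \{u_i\}$ by independence. Consequently, each $u_i$ has precisely $k-1$ neighbours in $V(G) \setminus N[u] = V(G')$, i.e., $\deg_{G'}(u_i) = k-1$. Summing over the $k$ vertices of $U$, the total number of edges with one endpoint in $U$ and the other in $V(G')$ is
\begin{equation}
\sum_{i=1}^{k} \deg_{G'}(u_i) = k(k-1),\notag
\end{equation}
as claimed.

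There is really no obstacle here: the lemma is a restatement of the edge-count used in the proof of Equation~(\ref{tool2}), just isolated for reuse in the lower-bound subsection. The only subtlety worth flagging is the mild abuse of phrasing "edges between $G[U]$ and $G'$", which should be read as "edges of $G$ with one endpoint in $U$ and the other in $V(G')$"; the $G[U]$ notation is harmless because $G[U]$ is edgeless and shares its vertex set with $U$.
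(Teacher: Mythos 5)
Your proof is correct and matches the paper's reasoning: the paper states this lemma without proof as ``obvious,'' but the identical counting argument (independence of $U$ from $a=0$, hence each $u_i$ has exactly $k-1$ neighbours in $G'$, summed over $|U|=k$) appears verbatim in the paper's proof of Equation~(\ref{tool2}). No differences worth noting.
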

\vskip 5 pt

\indent Since $a = 0$, every vertex in $U$ is adjacent to $k - 1$ vertices in $G'$. Thus, $n' \geq k - 1$. We prove the lower bound by constructing the following three lemmas. They will immediately imply $n' \geq k + 2$.
\vskip 5 pt

\begin{lem}\label{n=k-1}
$n' \geq k$.
\end{lem}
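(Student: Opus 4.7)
The plan is to derive the bound by ruling out the sole remaining possibility $n' = k-1$. Since $a = 0$, the set $U$ is independent, so every $u_i \in U$ has exactly $k - 1$ neighbors in $V(G')$, namely $U_i$. Consequently $n' \geq |U_1| = k - 1$, and it remains to show strict inequality.

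Suppose for contradiction that $n' = k - 1$. Then each $U_i \subseteq V(G')$ has cardinality $k - 1 = |V(G')|$, which forces $U_i = V(G')$ for every $i \in \{1, \dots, k\}$. In particular, any $v \in V(G')$ is adjacent to all $k$ vertices of $U$, so
\begin{equation*}
|N(v) \cap N(u)| = |U| = k.
\end{equation*}
However, $v \notin N[u]$ means $v$ is non-adjacent to $u$, so the definition of an $SQSR(n, k, 0; k-1, k-2, k-3)$ graph forces $|N(v) \cap N(u)| \in \{c_1, c_2, c_3\}$, and hence $|N(v) \cap N(u)| \leq c_1 = k - 1$. This contradicts the equality $|N(v) \cap N(u)| = k$, and we conclude $n' \geq k$.

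Alternatively, one may argue from the proposition's identities: if $n' = k - 1$, then Equations (\ref{tool1}) and (\ref{tool2}) together yield
\begin{equation*}
k(k-1) \;=\; (k-1)t_1 + (k-2)t_2 + (k-3)t_3 \;\leq\; (k-1)(t_1 + t_2 + t_3) \;=\; (k-1)^2,
\end{equation*}
which is impossible for $k \geq 2$.

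No substantial obstacle is anticipated; the argument amounts to a pigeonhole observation: $V(G')$ cannot absorb all $k(k-1)$ edges leaving $U$ when $|V(G')| = k - 1$, because each vertex of $V(G')$ can be adjacent to at most $c_1 = k - 1$ vertices of $U$. The only subtlety is noticing that the largest permissible value of $|N(v)\cap N(u)|$ for non-adjacent $v$ is $c_1 = k-1$, which is strictly smaller than $k$.
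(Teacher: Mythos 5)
Your proposal is correct and follows essentially the same route as the paper: the paper counts the $k(k-1)$ edges from $U$ into $G'$ and applies pigeonhole to find a vertex $v \in V(G')$ with $|N(v)\cap U| = k > c_1$, while you reach the identical contradiction by noting each $U_i$ must equal all of $V(G')$. Your secondary argument via Equations (\ref{tool1}) and (\ref{tool2}) is just the same double count in disguise, and is also valid.
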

\begin{proof} We assume to the contrary that $n' = k - 1$. Since we have by Lemma \ref{b1} that there are $k(k - 1)$ edges from $G'$ to $G[U]$, by the pigeonhole principle, there is a vertex $v$ in $G'$ such that $|N(u) \cap N(v)| = |U \cap N(v)| = k > c_{1}$, a contradiction. Hence, $n' \geq k$.
\end{proof}

\begin{lem}\label{n=k}
$n' \geq k + 1$.
\end{lem}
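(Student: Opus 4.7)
The plan is to argue by contradiction, assuming $n' = k$. The strategy is to show that this forces $t_1 = k$ and $t_2 = t_3 = 0$ regardless of the choice of $u$, and then to derive a contradiction from the grade~$3$ hypothesis.

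The first step is a short calculation using the two identities from the previous proposition. Multiplying (\ref{tool1}) by $c_1 = k-1$ and subtracting (\ref{tool2}) yields
\[
(k-1)(t_1 + t_2 + t_3) - \bigl((k-1)t_1 + (k-2)t_2 + (k-3)t_3\bigr) \;=\; t_2 + 2t_3.
\]
Substituting $t_1 + t_2 + t_3 = n' = k$ on the left and $c_1 t_1 + c_2 t_2 + c_3 t_3 = k(k-1)$ from (\ref{tool2}), the left-hand side equals $k(k-1) - k(k-1) = 0$. Hence $t_2 + 2t_3 = 0$, which forces $t_2 = t_3 = 0$ and therefore $t_1 = k$. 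Geometrically, the $k(k-1)$ edges between $U$ and $G'$ have to be distributed among only $k$ vertices of $G'$, each capable of absorbing at most $c_1 = k-1$ of them, so equality must hold everywhere.

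The second step is to observe that the identities hold for any base vertex $u$ and that $n' = n - k - 1$ depends only on $n$. Therefore the conclusion $t_2(u) = t_3(u) = 0$ is valid for every $u \in V(G)$, meaning every pair of non-adjacent vertices of $G$ shares exactly $c_1 = k-1$ common neighbours. But since $G$ is strictly quasi-strongly regular of grade $3$, each of $c_1, c_2, c_3$ must be realized by some pair of non-adjacent vertices; in particular there exists a pair $(u,v)$ of non-adjacent vertices with $|N(u) \cap N(v)| = c_3 = k-3$. Choosing this $u$ as the base vertex gives $v \in T_3$ and hence $t_3(u) \geq 1$, contradicting the derivation above. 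Together with Lemma \ref{n=k-1}, this yields $n' \geq k + 1$.

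The argument is essentially an arithmetic manipulation and there is no substantial obstacle; the key point to spot is that at $n' = k$ the two counting identities are so tight that they would rigidify $G$ into a strongly regular graph (with parameter $c_1$), an outcome the grade~$3$ hypothesis directly forbids.
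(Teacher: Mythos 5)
Your proof is correct, but it takes a genuinely different route from the paper's. The paper argues structurally: from $\sum_i \deg_U(x_i)=k(k-1)$ and $\deg_U(x_i)\le c_1=k-1$ it deduces that the bipartite graph between $U$ and $G'$ is complete bipartite minus a perfect matching, hence each vertex of $G'$ has exactly one neighbour inside $G'$, so $G'$ is a disjoint union of edges; since $k\ge 4$, some $u_i$ is adjacent to both endpoints of such an edge, contradicting $a=0$. You instead stay entirely at the level of the counting identities: the combination $(k-1)\cdot(\ref{tool1})-(\ref{tool2})$ gives $t_2+2t_3=(k-1)n'-k(k-1)=0$ when $n'=k$, forcing $t_2(u)=t_3(u)=0$ for every base vertex $u$, i.e.\ every non-adjacent pair has exactly $c_1$ common neighbours --- which contradicts properness (grade $3$), since some non-adjacent pair must realize $c_3$. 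Both arguments are sound. Yours is shorter, purely arithmetic, and as a bonus it disposes of the case $n'=k-1$ as well (there the same combination gives $t_2+2t_3=-(k-1)<0$), so you do not really need to cite Lemma \ref{n=k-1}; it also makes transparent that $n'=k$ would rigidify $G$ into a strongly regular graph. The paper's argument buys something different: it never invokes the grade-$3$ hypothesis, ruling out $n'=k$ for any triangle-free $k$-regular graph with these parameters, proper or not, and it extracts structural information (the perfect-matching picture) of the kind reused elsewhere in the paper. Either proof is acceptable here.
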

\begin{proof} We assume to the contrary that $n' \leq k$. By Lemma \ref{n=k-1}, we have $n' = k$. Let $V(G') = \{x_{1}, ..., x_{k}\}$. Lemma \ref{b1} yields that $\sum^{k}_{i = 1}deg_{U}(x_{i}) = k(k - 1)$. Clearly, $deg_{U}(x_{i}) = |N_{U}(x_{i})| \leq c_{1} = k - 1$ for all $i \in \{1, ..., k\}$. If there exists $x_{i}$ such that $deg_{U}(x_{i}) < k - 1$, then $k(k - 1) > \sum^{k}_{i = 1}deg_{U}(x_{i}) = k(k - 1)$, a contradiction. Thus,
\begin{align}
deg_{U}(x_{i}) = k - 1\notag
\end{align}
\noindent for all $i \in \{1, ..., n\}$. Because every vertex in $U$ is adjacent to $k - 1$ vertices in $G'$, the subgraph of $G$ containing edges between $U$ and $G'$ is complete bipartite minus a perfect matching. For $1 \leq i \leq k$, we let $x_{i}$ be the only vertex in $G'$ that $u_{i}$ is not adjacent. Because $deg(x_{i}) = k$, $deg_{G'}(x_{i}) = 1$ implies that $G'$ is a union of edges. Without loss of generality, we let $G'$ be the union of edges $x_{1}x_{2}, x_{3}x_{4}, ..., x_{k - 1}x_{k}$. Because $k \geq 4$, $u_{3}$ exists. We see that $G[N(u_{3})]$ has an edge $x_{1}x_{2}$ contradicting $a = 0$. Therefore, $n' \geq k + 1$.
\end{proof}

\begin{lem}\label{n=k+1}
$n' \geq k + 2$.
\end{lem}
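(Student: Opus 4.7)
The plan is to derive a contradiction from the assumption $n' = k+1$, following the same style as the proofs of Lemmas~\ref{n=k-1} and~\ref{n=k} but pushing the counting further. First, I would substitute $n' = k + 1$ into Equations~(\ref{tool1}) and~(\ref{tool2}) and eliminate one variable. This yields the clean relations $t_{1} = t_{3} + 2$ and $t_{2} = k - 1 - 2 t_{3}$; in particular $t_{1} \geq 2$, so $T_{1}$ is always nonempty. Moreover, since every $x \in T_{j}$ satisfies $deg_{U}(x) = c_{j} = k - j$ and $deg_{G}(x) = k$, we have $deg_{G'}(x) = j$; thus each vertex of $T_{1}$ has degree exactly $1$ in $G'$.

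The key structural step is to exploit $a = 0$ inside $G'$: if $x \in T_{i}$ and $y \in T_{j}$ are adjacent in $G'$, then the edge $xy$ of $G$ forces $N(x) \cap N(y) = \emptyset$, so in particular the subsets $N(x) \cap U$ and $N(y) \cap U$ of $U$ are disjoint. Since they have sizes $c_{i}$ and $c_{j}$, this gives $c_{i} + c_{j} \leq |U| = k$, equivalently $i + j \geq k$. Hence every edge of $G'$ joins $T_{i}$ to $T_{j}$ with $i + j \geq k$.

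I would then apply this observation to a vertex $x \in T_{1}$: its unique $G'$-neighbour must lie in some $T_{j}$ with $1 + j \geq k$, that is $j \geq k - 1$. For $k \geq 5$ this requires $j \geq 4$, which is impossible since $j \in \{1, 2, 3\}$. This contradicts $deg_{G'}(x) = 1$ and settles every $k \geq 5$ in a single stroke.

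The remaining, most delicate case is $k = 4$, where the constraint $i + j \geq k$ is loose enough to admit edges of several types. The relations $t_{1} = t_{3} + 2$, $t_{2} = 3 - 2 t_{3}$, $t_{i} \geq 0$ leave only the two triples $(t_{1}, t_{2}, t_{3}) \in \{(2, 3, 0),\, (3, 1, 1)\}$, which I would rule out in turn. In the first, $T_{3} = \emptyset$ while each vertex of $T_{1}$ still needs a $T_{3}$-neighbour, an immediate contradiction. In the second, the three vertices of $T_{1}$ collectively demand three edges to the single vertex $z$ of $T_{3}$, which saturates $z$'s entire $G'$-degree of $3$; then the lone vertex $y$ of $T_{2}$ has no admissible neighbour in $T_{2} \cup T_{3}$, contradicting $deg_{G'}(y) = 2$. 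I expect the main obstacle to be exactly this bookkeeping for $k = 4$: the edge-parity inequality $i + j \geq k$ is most permissive there, and one must eliminate each surviving configuration by hand rather than by a single global count.
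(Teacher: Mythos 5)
Your proof is correct, but it takes a genuinely different and noticeably shorter route than the paper's. The paper first uses the independent set $U_{1} = N_{G'}(u_{1})$ of size $k-1$ to force $3 \geq |N_{G'}(x)| \geq |N_{G'}(x) \cap U_{1}| \geq k-3$ for a vertex $x \in V(G') \setminus U_{1}$, concluding $k \leq 6$; it then grinds through $k = 6, 5, 4$ by solving for all admissible triples $(t_{1}, t_{2}, t_{3})$, listing every graph $G'$ with the resulting degree sequence, and checking in each case that $\alpha(G')$ is too small or that a unique maximum independent set forces $|N(u_{i}) \cap N(u_{j})| = k > c_{1}$. Your argument replaces all of this with one clean observation: since $a = 0$, adjacent vertices $x \in T_{i}$, $y \in T_{j}$ of $G'$ have disjoint neighbourhoods in $U$, so $c_{i} + c_{j} = (k-i) + (k-j) \leq k$, i.e.\ $i + j \geq k$; combined with $\deg_{G'}(x) = j$ for $x \in T_{j}$ and $t_{1} = t_{3} + 2 \geq 2$, a $T_{1}$-vertex's unique $G'$-neighbour would need to lie in $T_{j}$ with $j \geq k-1$, killing every $k \geq 5$ instantly, and the two surviving triples $(2,3,0)$ and $(3,1,1)$ for $k=4$ fall to a two-line count. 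I verified each step: the linear algebra giving $t_{1} = t_{3}+2$ and $t_{2} = k-1-2t_{3}$ is right, the disjointness inequality is right (and correctly covers edges inside a single $T_{i}$), and in the $(3,1,1)$ case the lone $T_{2}$-vertex indeed has at most one admissible neighbour while needing two. What your approach buys is a uniform argument that avoids the paper's enumeration of degree sequences and independence-number checks entirely; what the paper's approach buys is reuse of the machinery ($U_{1}$ as a large independent set, the pigeonhole on $|N(u_{i}) \cap N(u_{j})|$) that it deploys again in the later lemmas. Your edge-type inequality $i + j \geq k$ would in fact streamline several other parts of the paper as well.
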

\begin{proof} We assume to the contrary $n' \leq k + 1$. By Lemma \ref{n=k}, we have that $n' = k + 1$. By Equations (\ref{tool1}) and (\ref{tool2}), we have

\begin{equation}\label{eq1}
t_{1} + t_{2} + t_{3} = k + 1
\end{equation}
\noindent and
\begin{equation}\label{eq2}
(k - 1)t_{1} + (k - 2)t_{2} + (k - 3)t_{3} = k(k - 1).
\end{equation}

\noindent By Equations (\ref{eq1}) and (\ref{eq2}), we have that

\begin{equation}\label{eq3}
t_{1} + 2t_{2} + 3t_{3} = 2k
\end{equation}

\noindent Recall that $u_{1} \in U$ and $U_{1} = N_{G'}(u_{1})$. Because $a = 0$, $U_{1}$ is an independent set of size $k - 1$. Thus, there are two vertices in $V(G') \setminus U_{1}$ since $n' = k + 1$. Let $x$ be one of these vertices. Clearly, $|N_{G'}(x) \cap U_{1}| = |N(x) \cap N(u_{1})| \geq c_{3} = k - 3$. Moreover, $x$ shares at least $c_{3}$ common neighbours with $u$. That is $|N_{G'}(x)| \leq k - c_{3} = 3$. Therefore,
\begin{align}
3 \geq |N_{G'}(x)| \geq |N_{G'}(x) \cap U_{1}| \geq c_{3} = k - 3\notag
\end{align}
\noindent implying that
\begin{align}
k \leq 6.\notag
\end{align}
\noindent We then consider $G$ when $k = 4, 5$ and $6$.
\vskip 5 pt

\indent We first consider the case when $k = 6$. Because $U_{1}$ is an independent set of size $5$, $\alpha(G') \geq 5$. By Equations (\ref{eq1}) and (\ref{eq3}), we have
\begin{equation}
t_{1} + t_{2} + t_{3} = 7\notag
\end{equation}
\noindent and
\begin{equation}
t_{1} + 2t_{2} + 3t_{3} = 12\notag
\end{equation}
\noindent which can be solved that (\emph{i}) $t_{1} = 2, t_{2} = 5, t_{3} = 0$, (\emph{ii}) $t_{1} = 3, t_{2} = 3, t_{3} = 1$ and (\emph{iii}) $t_{1} = 4, t_{2} = 1, t_{3} = 2$. For the solution (\emph{i}) $t_{1} = 2, t_{2} = 5, t_{3} = 0$, there are $t_{1} = 2$ vertices of $G'$, each of which is adjacent to $c_{1} = k - 1$ vertices in $U$. Thus, all the $2$ vertices in $T_{1}$ have degree $1$ in $G'$. Similarly, there are $t_{2} = 5$ vertices of $G$, each of which is adjacent to $c_{2} = k - 2$ vertices in $U$. So, all the $5$ vertices in $T_{2}$ have degree $2$ in $G'$. Also, there is no vertex in $G'$ which is adjacent to $c_{3} = k - 3$ vertices in $U$ as $t_{3} = 0$. Thus, there is no vertex of degree $3$ in $G$. Therefore, the graph $G'$ has degree sequence $2, 2, 2, 2, 2, 1, 1$. Thus $G'$ is either $P_{7}$, the union of $C_{5}$ and $P_{2}$, the union of $C_{4}$ and $P_{3}$, or the union of $C_{3}$ and $P_{4}$. In each case, it can be checked that $\alpha(G') \leq 4$, a contradiction. For the solution (\emph{ii}) $t_{1} = 3, t_{2} = 3, t_{3} = 1$, the graph $G$ has degree sequence $3, 2, 2, 2, 1, 1, 1$. Thus $G$ is the union of $K_{1, 3}$ and $C_{3}$ or $G$ is one of the graphs in Figure \ref{f1}.

\begin{figure}[htb]
\centering
\definecolor{ududff}{rgb}{0.30196078431372547,0.30196078431372547,1}
\resizebox{0.7\textwidth}{!}{%
\begin{tikzpicture}[line cap=round,line join=round,>=triangle 45,x=1cm,y=1cm]
%\clip(-11.741135013826332,-7.01138685486012) rectangle (12.354502040507876,6.852872004095263);
\draw [line width=0.4pt] (-7,5)-- (-8,4);
\draw [line width=0.4pt] (-7,5)-- (-6,4);
\draw [line width=0.4pt] (-8,4)-- (-7,3);
\draw [line width=0.4pt] (-7,3)-- (-6,4);
\draw [line width=0.4pt] (-7,3)-- (-7,2);
\draw [line width=0.4pt] (-5,5)-- (-5,3);
\draw [line width=0.4pt] (-3,4)-- (-1,4);
\draw [line width=0.4pt] (2,5)-- (4,5);
\draw [line width=0.4pt] (-3,4)-- (-2,3);
\draw [line width=0.4pt] (-2,3)-- (-1,4);
\draw [line width=0.4pt] (-2,3)-- (-2,2);
\draw [line width=0.4pt] (0,5)-- (0,4);
\draw [line width=0.4pt] (0,4)-- (0.02,3.14);
\draw [line width=0.4pt] (2,5)-- (3,4);
\draw [line width=0.4pt] (3,4)-- (4,5);
\draw [line width=0.4pt] (3,4)-- (3,3);
\draw [line width=0.4pt] (3,3)-- (3,2);
\draw [line width=0.4pt] (5,5)-- (5,3);
\draw [line width=0.4pt] (-8,-2)-- (-7,-1);
\draw [line width=0.4pt] (-7,-1)-- (-7,-2);
\draw [line width=0.4pt] (-7,-1)-- (-6,-2);
\draw [line width=0.4pt] (-8,-2)-- (-8,-3);
\draw [line width=0.4pt] (-7,-2)-- (-7,-3);
\draw [line width=0.4pt] (-6,-2)-- (-6,-3);
\draw [line width=0.4pt] (-2,-1)-- (-3,-2);
\draw [line width=0.4pt] (-2,-1)-- (-2,-2);
\draw [line width=0.4pt] (-2,-1)-- (-1,-2);
\draw [line width=0.4pt] (-3,-2)-- (-3,-3);
\draw [line width=0.4pt] (-3,-3)-- (-3,-4);
\draw [line width=0.4pt] (-3,-4)-- (-3,-5);
\draw [line width=0.4pt] (3,-1)-- (2,-2);
\draw [line width=0.4pt] (3,-1)-- (3,-2);
\draw [line width=0.4pt] (3,-1)-- (4,-2);
\draw [line width=0.4pt] (2,-2)-- (2,-3);
\draw [line width=0.4pt] (3,-2)-- (3,-3);
\draw [line width=0.4pt] (2,-3)-- (2,-4);
\draw (-7.237104395208476,1.625972273847378) node[anchor=north west] {$H_1$};
\draw (-2.2511610355039364,1.625972273847378) node[anchor=north west] {$H_2$};
\draw (2.7903876404798367,1.625972273847378) node[anchor=north west] {$H_3$};
\draw (-7.255639500634888,-4.379401884309766) node[anchor=north west] {$H_4$};
\draw (-2.2326259300775253,-4.379401884309766) node[anchor=north west] {$H_5$};
\draw (2.771852535053426,-4.397936989736178) node[anchor=north west] {$H_6$};
\begin{scriptsize}
\draw [fill=ududff] (-7,5) circle (2.5pt);
\draw [fill=ududff] (-8,4) circle (2.5pt);
\draw [fill=ududff] (-6,4) circle (2.5pt);
\draw [fill=ududff] (-7,3) circle (2.5pt);
\draw [fill=ududff] (-7,2) circle (2.5pt);
\draw [fill=ududff] (-5,5) circle (2.5pt);
\draw [fill=ududff] (-5,3) circle (2.5pt);
\draw [fill=ududff] (-3,4) circle (2.5pt);
\draw [fill=ududff] (-1,4) circle (2.5pt);
\draw [fill=ududff] (-2,3) circle (2.5pt);
\draw [fill=ududff] (-2,2) circle (2.5pt);
\draw [fill=ududff] (0,5) circle (2.5pt);
\draw [fill=ududff] (0,4) circle (2.5pt);
\draw [fill=ududff] (0.02,3.14) circle (2.5pt);
\draw [fill=ududff] (2,5) circle (2.5pt);
\draw [fill=ududff] (4,5) circle (2.5pt);
\draw [fill=ududff] (3,4) circle (2.5pt);
\draw [fill=ududff] (3,3) circle (2.5pt);
\draw [fill=ududff] (3,2) circle (2.5pt);
\draw [fill=ududff] (5,5) circle (2.5pt);
\draw [fill=ududff] (5,3) circle (2.5pt);
\draw [fill=ududff] (-7,-1) circle (2.5pt);
\draw [fill=ududff] (-8,-2) circle (2.5pt);
\draw [fill=ududff] (-7,-2) circle (2.5pt);
\draw [fill=ududff] (-6,-2) circle (2.5pt);
\draw [fill=ududff] (-8,-3) circle (2.5pt);
\draw [fill=ududff] (-7,-3) circle (2.5pt);
\draw [fill=ududff] (-6,-3) circle (2.5pt);
\draw [fill=ududff] (-2,-1) circle (2.5pt);
\draw [fill=ududff] (-3,-2) circle (2.5pt);
\draw [fill=ududff] (-2,-2) circle (2.5pt);
\draw [fill=ududff] (-1,-2) circle (2.5pt);
\draw [fill=ududff] (-3,-3) circle (2.5pt);
\draw [fill=ududff] (-3,-4) circle (2.5pt);
\draw [fill=ududff] (-3,-5) circle (2.5pt);
\draw [fill=ududff] (3,-1) circle (2.5pt);
\draw [fill=ududff] (2,-2) circle (2.5pt);
\draw [fill=ududff] (3,-2) circle (2.5pt);
\draw [fill=ududff] (4,-2) circle (2.5pt);
\draw [fill=ududff] (2,-3) circle (2.5pt);
\draw [fill=ududff] (3,-3) circle (2.5pt);
\draw [fill=ududff] (2,-4) circle (2.5pt);
\end{scriptsize}
\end{tikzpicture}}
\vskip -0.25 cm
\caption{The graphs with degree sequence $3, 2, 2, 2, 1, 1, 1$.}
\label{f1}
\end{figure}
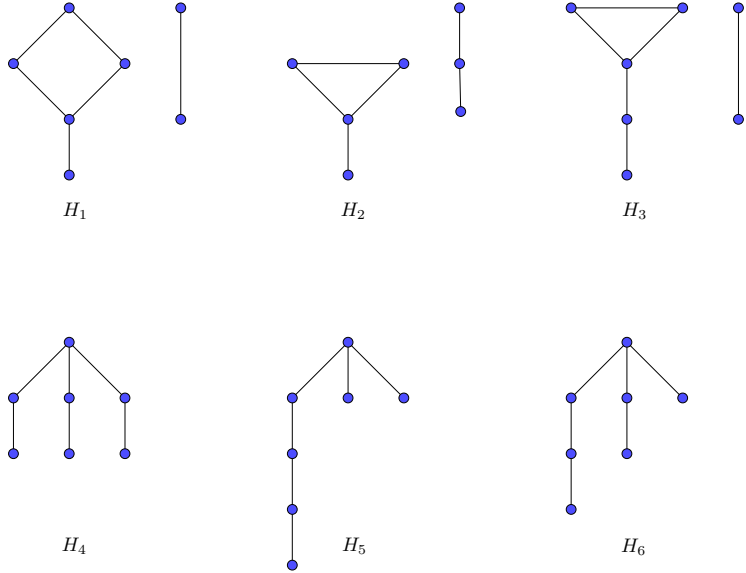

\noindent It is easy to check that $\alpha(G') \leq 4$ in all the cases, a contradiction. For the solution (\emph{iii}) $t_{1} = 4, t_{2} = 1, t_{3} = 2$, the graph $G'$ has degree sequence $3, 3, 2, 1, 1, 1, 1$. Thus, the graph $G'$ is one of the graphs in Figure \ref{f2}.

\begin{figure}[htb]
\centering
\definecolor{ududff}{rgb}{0.30196078431372547,0.30196078431372547,1}
\resizebox{0.7\textwidth}{!}{%
\begin{tikzpicture}[line cap=round,line join=round,>=triangle 45,x=1cm,y=1cm]
%\clip(-6.536201283908501,-11.817581748726594) rectangle (15.90906896967556,7.95691716458715);
\draw [line width=0.4pt] (-4,3)-- (-5,2);
\draw [line width=0.4pt] (-4,3)-- (-3,2);
\draw [line width=0.4pt] (-4,3)-- (-1,3);
\draw [line width=0.4pt] (-1,3)-- (-2,2);
\draw [line width=0.4pt] (-1,3)-- (0,2);
\draw [line width=0.4pt] (-5,2)-- (-5,1);
\draw [line width=0.4pt] (3,3)-- (2,2);
\draw [line width=0.4pt] (3,3)-- (4,2);
\draw [line width=0.4pt] (3,3)-- (5,3);
\draw [line width=0.4pt] (5,3)-- (7,3);
\draw [line width=0.4pt] (7,3)-- (6,2);
\draw [line width=0.4pt] (7,3)-- (8,2);
\draw (-2.818064524544543,0.6549951621884481) node[anchor=north west] {$H_7$};
\draw (4.781068471546902,0.6745805565082714) node[anchor=north west] {$H_8$};
\draw (2.450406547487928,-3.3012544904158427) node[anchor=north west] {$H_9$};
\draw [line width=0.4pt] (0,-2)-- (1,-1);
\draw [line width=0.4pt] (1,-1)-- (2,-2);
\draw [line width=0.4pt] (0,-2)-- (0,-3);
\draw [line width=0.4pt] (2,-2)-- (2,-3);
\draw [line width=0.4pt] (4,-1)-- (4,-3);
\draw [line width=0.4pt] (0,-2)-- (2,-2);
\begin{scriptsize}
\draw [fill=ududff] (-4,3) circle (2.5pt);
\draw [fill=ududff] (-1,3) circle (2.5pt);
\draw [fill=ududff] (-5,2) circle (2.5pt);
\draw [fill=ududff] (-3,2) circle (2.5pt);
\draw [fill=ududff] (-2,2) circle (2.5pt);
\draw [fill=ududff] (0,2) circle (2.5pt);
\draw [fill=ududff] (-5,1) circle (2.5pt);
\draw [fill=ududff] (2,2) circle (2.5pt);
\draw[color=ududff] (2.1566256326905786,2.42747334813245) node {$x$};
\draw [fill=ududff] (3,3) circle (2.5pt);
\draw[color=ududff] (3.1554807430015672,3.426328458443434) node {};
\draw [fill=ududff] (4,2) circle (2.5pt);
\draw[color=ududff] (4.154335853312556,2.42747334813245) node {$y$};
\draw [fill=ududff] (5,3) circle (2.5pt);
\draw[color=ududff] (5.153190963623546,3.426328458443434) node {$z$};
\draw [fill=ududff] (7,3) circle (2.5pt);
\draw[color=ududff] (7.150901184245523,3.426328458443434) node {};
\draw [fill=ududff] (6,2) circle (2.5pt);
\draw[color=ududff] (6.1520460739345335,2.42747334813245) node {$w$};
\draw [fill=ududff] (8,2) circle (2.5pt);
\draw[color=ududff] (8.149756294556513,2.42747334813245) node {$s$};
\draw [fill=ududff] (1,-1) circle (2.5pt);
\draw [fill=ududff] (0,-2) circle (2.5pt);
\draw [fill=ududff] (2,-2) circle (2.5pt);
\draw [fill=ududff] (0,-3) circle (2.5pt);
\draw [fill=ududff] (2,-3) circle (2.5pt);
\draw [fill=ududff] (4,-1) circle (2.5pt);
\draw [fill=ududff] (4,-3) circle (2.5pt);
\end{scriptsize}
\end{tikzpicture}}
\vskip -0.25 cm
\caption{The graphs with degree sequence $3, 3, 2, 1, 1, 1, 1$.}
\label{f2}
\end{figure}
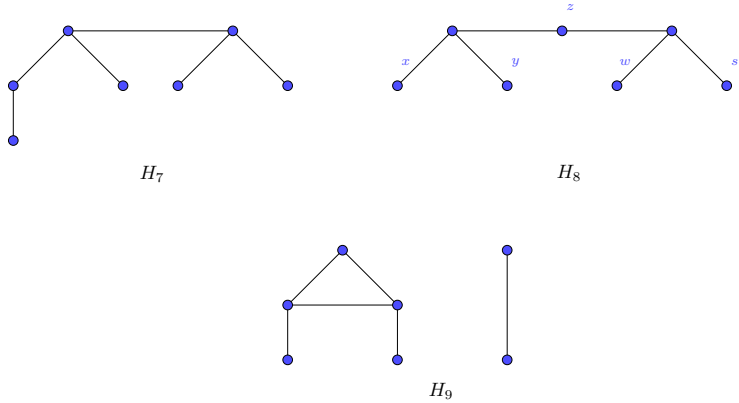

\noindent Clearly, the only one possible graph whose independence number is $5$ is when $G' = H_{8}$. However, $H_{8}$ has the unique maximum independent set $I_{H_{8}} = \{x, y, z, w, s\}$. Because $a = 0$ and each $U_{i}$ is an independent set containing $5$ vertices, it follows that $I_{H_{8}} = U_{1} = \cdots = U_{6}$. That is, all vertices in $U$ are adjacent to every vertex in $I_{H_{8}}$. Hence, $|N(u_{i}) \cap N(u_{j})| = k > c_{1}$ for any $u_{i}, u_{j} \in U$, a contradiction.
\vskip 5 pt

%\begin{figure}[htb]
%\begin{center}
%\includegraphics[width=0.8\textwidth]{h79}
%\end{center}
%\vskip -0.25 cm
%\caption{The graphs with degree sequence $3, 3, 2, 1, 1, 1, 1$.}
%\label{f3}
%\end{figure}
%\vskip 10 pt

 We then consider the case when $k = 5$. Clearly $U_{1}$ is an independent set of size $4$ and this implies $\alpha(G') \geq 4$. By Equations (\ref{eq1}) and (\ref{eq3}), we have
\begin{equation}
t_{1} + t_{2} + t_{3} = 6\notag
\end{equation}
\noindent and
\begin{equation}
t_{1} + 2t_{2} + 3t_{3} = 10\notag
\end{equation}
\noindent which can be solve that (\emph{i}) $t_{1} = 2, t_{2} = 4, t_{3} = 0$, (\emph{ii}) $t_{1} = 3, t_{2} = 2, t_{3} = 1$ and (\emph{iii}) $t_{1} = 4, t_{2} = 0, t_{3} = 2$. For the solution (\emph{i}) $t_{1} = 2, t_{2} = 4, t_{3} = 0$, the graph $G'$ has degree sequence $2, 2, 2, 2, 1, 1$. Thus $G'$ is either $P_{6}$, the union of $C_{3}$ and $P_{3}$, or the union of $C_{4}$ and $P_{2}$. In each case, it can be checked that $\alpha(G') \leq 3$, a contradiction. For the solution (\emph{ii}) $t_{1} = 3, t_{2} = 2, t_{3} = 1$, the graph $G'$ has degree sequence $3, 2, 2, 1, 1, 1$. Thus, $G'$ is one of the graphs in Figure \ref{f3}.

\begin{figure}[htb]
\centering
\definecolor{ududff}{rgb}{0.30196078431372547,0.30196078431372547,1}
\resizebox{0.65\textwidth}{!}{%
\begin{tikzpicture}[line cap=round,line join=round,>=triangle 45,x=1cm,y=1cm]
%\clip(-5.806435305673529,-6.333097232720378) rectangle (19.087716846284724,8.237189003324065);
\draw [line width=0.4pt] (0,3)-- (0,2);
\draw [line width=0.4pt] (0,3)-- (-1,2);
\draw [line width=0.4pt] (-1,2)-- (-1,1);
\draw [line width=0.4pt] (0,3)-- (1,2);
\draw [line width=0.4pt] (1,2)-- (1,1);
\draw [line width=0.4pt] (3,3)-- (5,3);
\draw [line width=0.4pt] (3,3)-- (4,2);
\draw [line width=0.4pt] (4,2)-- (5,3);
\draw [line width=0.4pt] (4,2)-- (4,1);
\draw [line width=0.4pt] (6,3)-- (6,1);
\draw [line width=0.4pt] (8,4)-- (9,3);
\draw [line width=0.4pt] (10,4)-- (9,3);
\draw [line width=0.4pt] (9,3)-- (9,2);
\draw [line width=0.4pt] (9,2)-- (9,1);
\draw [line width=0.4pt] (9,1)-- (9,0);
\draw (-0.3523174633039584,-0.33356760611384256) node[anchor=north west] {$H_{10}$};
\draw (3.6993129338848654,-0.33356760611384256) node[anchor=north west] {$H_{11}$};
\draw (8.763850930370895,-0.3530465984080196) node[anchor=north west] {$H_{12}$};
\begin{scriptsize}
\draw [fill=ududff] (0,3) circle (2.5pt);
\draw [fill=ududff] (-1,2) circle (2.5pt);
\draw [fill=ududff] (1,2) circle (2.5pt);
\draw [fill=ududff] (0,2) circle (2.5pt);
\draw [fill=ududff] (-1,1) circle (2.5pt);
\draw [fill=ududff] (1,1) circle (2.5pt);
\draw [fill=ududff] (3,3) circle (2.5pt);
\draw [fill=ududff] (5,3) circle (2.5pt);
\draw [fill=ududff] (4,2) circle (2.5pt);
\draw [fill=ududff] (4,1) circle (2.5pt);
\draw [fill=ududff] (6,3) circle (2.5pt);
\draw [fill=ududff] (6,1) circle (2.5pt);
\draw [fill=ududff] (8,4) circle (2.5pt);
\draw[color=ududff] (8.160002169251406,4.409567017518272) node {$z$};
\draw [fill=ududff] (10,4) circle (2.5pt);
\draw[color=ududff] (10.146859383257464,4.409567017518272) node {$w$};
\draw [fill=ududff] (9,3) circle (2.5pt);
\draw [fill=ududff] (9,2) circle (2.5pt);
\draw[color=ududff] (9.153430776254435,2.4227098035122117) node {$y$};
\draw [fill=ududff] (9,1) circle (2.5pt);
\draw [fill=ududff] (9,0) circle (2.5pt);
\draw[color=ududff] (9.153430776254435,0.4163735972119743) node {$u$};
\end{scriptsize}
\end{tikzpicture}}
\vskip -0.25 cm
\caption{The graphs with degree sequence $3, 2, 2, 1, 1, 1$.}
\label{f3}
\end{figure}
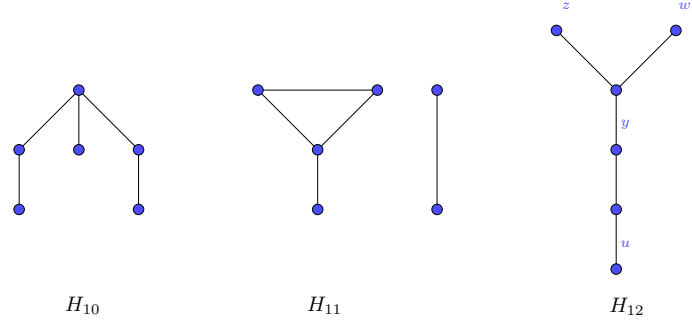

%\vskip 10 pt

%\begin{figure}[htb]
%\begin{center}
%\includegraphics[width=0.8\textwidth]{h1012}
%\end{center}
%\vskip -0.25 cm
%\caption{The graphs with degree sequence $3, 2, 2, 1, 1, 1$.}
%\label{f3}
%\end{figure}
%\vskip 10 pt

\noindent It can be checked that the only graph whose independence number is $4$ is $H_{12}$. However, $H_{12}$ has the unique maximum independent set $I_{H_{12}} = \{x, y, z, w\}$. Because $a = 0$ and every $U_{i}$ is an independent set of $4$ vertices, it follows that all vertices in $U$ are adjacent to every vertex in $I_{H_{12}}$. Hence, $|N(u_{i}) \cap N(u_{j})| = k > c_{1}$ for any $u_{i}, u_{j} \in U$, a contradiction. For the solution (\emph{iii}) $t_{1} = 4, t_{2} = 0, t_{3} = 2$, the graph $G'$ has degree sequence $3, 3, 1, 1, 1, 1$ which can be only the graph $H_{13}$ as illustrated in Figure \ref{f4}.

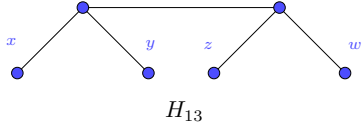
\begin{figure}[htb]
\centering
\definecolor{ududff}{rgb}{0.30196078431372547,0.30196078431372547,1}
\resizebox{0.35\textwidth}{!}{%
\begin{tikzpicture}[line cap=round,line join=round,>=triangle 45,x=1cm,y=1cm]
%\clip(-11.33,-7.33) rectangle (1.71,7.33);
\draw [line width=0.4pt] (-5,3)-- (-6,2);
\draw [line width=0.4pt] (-5,3)-- (-4,2);
\draw [line width=0.4pt] (-5,3)-- (-2,3);
\draw [line width=0.4pt] (-2,3)-- (-3,2);
\draw [line width=0.4pt] (-2,3)-- (-1,2);
\draw (-3.87,1.69) node[anchor=north west] {$H_{13}$};
\begin{scriptsize}
\draw [fill=ududff] (-5,3) circle (2.5pt);
\draw [fill=ududff] (-6,2) circle (2.5pt);
\draw[color=ududff] (-6.09,2.48) node {$x$};
\draw [fill=ududff] (-4,2) circle (2.5pt);
\draw[color=ududff] (-3.97,2.42) node {$y$};
\draw [fill=ududff] (-2,3) circle (2.5pt);
\draw [fill=ududff] (-3,2) circle (2.5pt);
\draw[color=ududff] (-3.09,2.42) node {$z$};
\draw [fill=ududff] (-1,2) circle (2.5pt);
\draw[color=ududff] (-0.85,2.42) node {$w$};
\end{scriptsize}
\end{tikzpicture}}
\vskip -0.25 cm
\caption{The graph with degree sequence $3, 3, 1, 1, 1, 1$.}
\label{f4}
\end{figure}

%\vskip 5 pt

\noindent Similarly, $H_{13}$ has the unique maximum independent set $I_{H_{13}} = \{x, y, z, w\}$. Because $a = 0$, all vertices in $U$ are adjacent to every vertex in $I_{H_{13}}$. Hence, $|N(u_{i}) \cap N(u_{j})| = k > c_{1}$ for any $u_{i}, u_{j} \in U$, a contradiction.

\indent We finally consider the case when $k = 4$. Clearly $U_{1}$ is an independent set of size $3$ and this implies $\alpha(G') \geq 3$. By Equations (\ref{eq1}) and (\ref{eq3}), we have
\begin{equation}
t_{1} + t_{2} + t_{3} = 5\notag
\end{equation}
\noindent and
\begin{equation}
t_{1} + 2t_{2} + 3t_{3} = 8\notag
\end{equation}
\noindent which can be solved that (\emph{i}) $t_{1} = 2, t_{2} = 3, t_{3} = 0$ and (\emph{ii}) $t_{1} = 3, t_{2} = 1, t_{3} = 1$. For the solution (\emph{i}) $t_{1} = 2, t_{2} = 3, t_{3} = 0$, the graph $G'$ has degree sequence $2, 2, 2, 1, 1$. Thus, $G'$ is either $P_{5}$, or the union of $C_{3}$ and $P_{2}$. Clearly, $\alpha(G') = 2$ when $G'$ is the union of $C_{3}$ and $P_{2}$. Hence, $G$ is $P_{5}$. However, it can be checked that $P_{5}$ has the unique maximum independent set, $I_{P_{5}}$ say. Because $a = 0$, all vertices in $U$ are adjacent to every vertex in $I_{P_{5}}$. Thus, $|N(u_{i}) \cap N(u_{j})| = k > c_{1}$ for any $u_{i}, u_{j} \in U$, a contradiction. For the solution (\emph{ii}) $t_{1} = 3, t_{2} = 1, t_{3} = 1$, the graph $G'$ has degree sequence $3, 2, 1, 1, 1$. Thus, the graph $G'$ is $H_{14}$ as illustrated in Figure \ref{f5}.

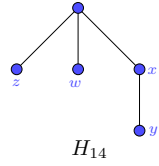
\begin{figure}[htb]
\centering
\definecolor{ududff}{rgb}{0.30196078431372547,0.30196078431372547,1}
\resizebox{0.15\textwidth}{!}{%
\begin{tikzpicture}[line cap=round,line join=round,>=triangle 45,x=1cm,y=1cm]
%\clip(-2.493884297520659,-6.966942148760331) rectangle (10.82842975206612,5.148760330578511);
\draw [line width=0.4pt] (4,3)-- (4,2);
\draw [line width=0.4pt] (4,3)-- (3,2);
\draw [line width=0.4pt] (4,3)-- (5,2);
\draw [line width=0.4pt] (5,2)-- (5,1);
\draw (3.7705785123966975,0.9669421487603295) node[anchor=north west] {$H_{14}$};
\begin{scriptsize}
\draw [fill=ududff] (4,3) circle (2.5pt);
\draw [fill=ududff] (3,2) circle (2.5pt);
\draw[color=ududff] (2.993719008264466,1.7851239669421475) node {$z$};
\draw [fill=ududff] (4,2) circle (2.5pt);
\draw[color=ududff] (3.9523966942148796,1.7685950413223128) node {$w$};
\draw [fill=ududff] (5,2) circle (2.5pt);
\draw[color=ududff] (5.208595041322317,1.983471074380164) node {$x$};
\draw [fill=ududff] (5,1) circle (2.5pt);
\draw[color=ududff] (5.225123966942152,0.9917355371900816) node {$y$};
\end{scriptsize}
\end{tikzpicture}}
\vskip -0.25 cm
\caption{The graph with degree sequence $3, 2, 1, 1, 1$.}
\label{f5}
\end{figure}

\noindent Similarly, $H_{14}$ has exactly two independent sets of size $3$ which are $\{x, w, z\}$ and $\{y, w, z\}$. Because $|U| = 4$, by the pigeonhole principle, there are two vertices $u_{i}, u_{j} \in U$ such that $N(u_{i}) \cap N(u_{j}) = \{x, w, z, u\}$ or $N(u_{i}) \cap N(u_{j}) = \{y, w, z, u\}$. In any case, we have $|N(u_{i}) \cap N(u_{j})| = k = 4 > c_{1}$, a contradiction. Hence, $n' \geq k + 2$ implying that $n \geq 2k + 3$. This proves the Lemma  and establishes the lower bound.
\end{proof}

Finally, we will show that if the graph $G$ is $SQSR(n, k, 0;, k - 1, k - 2, k - 3)$ when $n = 2k + 3$, then $G$ is isomorphic to $G_{1}$. Let $n = 2k + 3$. Because $|U_{1}| = k - 1$ and $n' = k + 2$, it follows that there are $3$ vertices in $V(G') \setminus U_{1}$. As $a = 0$, we have $U_{1}$ is an independent set. We let $x \in V(G') \setminus U_{1}$. Further, we let $X_{1} = N(x) \cap U_{1}, X_{2} = N(x) \cap U$ and $X_{3} = N(x) \cap (V(G') \setminus U_{1})$. Thus, $X_{3} = N(x) \setminus (X_{1} \cup X_{2})$ and
\begin{equation}\label{eq6}
k = |N(x)| = |X_{1}| + |X_{2}| + |X_{3}|.
\end{equation}
Clearly, $x$ has at least $c_{3}$ common neighbours with $u$ and $u_{1}$. Therefore,
\begin{equation}\label{eq7}
|X_{1}| \geq c_{3} = k - 3
\end{equation}
\noindent and
\begin{equation}\label{eq8}
|X_{2}| \geq c_{3} = k - 3.
\end{equation}
\noindent Hence, Equations (\ref{eq6}), (\ref{eq7}) and (\ref{eq8}) imply the equation
\begin{equation}
k \geq k - 3 + k - 3 + |X_{3}|
\end{equation}
\noindent which yields that
\begin{equation}\label{eq9}
6 \geq k + |X_{3}| \geq k
\end{equation}
\noindent because $|X_{3}| \geq 0$.
\vskip 5 pt

\begin{lem}\label{k=4}
If $n = 2k + 3$, then $k = 4$.
\end{lem}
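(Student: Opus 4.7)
The plan is to combine the counting identities of Proposition 1 with Equation (\ref{eq9}) and then to handle the two remaining values $k = 5$ and $k = 6$ by a symmetry-plus-parity argument. Substituting $n' = k+2$ into Equations (\ref{tool1}) and (\ref{tool2}) and eliminating $t_3$ yields
\[
2t_1 + t_2 = 6.
\]
Since Equation (\ref{eq9}) forces $k \leq 6$, it suffices to rule out $k = 5$ and $k = 6$.

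The key observation is that the derivation of Equations (\ref{eq6})--(\ref{eq9}) uses no property of $u_1$ beyond its membership in $U$. Thus for every $i \in \{1, \ldots, k\}$ and every $x \in V(G') \setminus U_i$, we have $|N(x) \cap U_i| \geq k-3$ and $|N(x) \cap U| \geq k-3$, together with $|N(x)| = k$. In each of the remaining cases we use these three constraints to force $x$ into a specific $T_j$.

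For $k = 6$, the constraints force $|N(x) \cap U| = 3 = c_3$, so every $x \in V(G') \setminus U_i$ lies in $T_3$. Since no vertex of $G'$ can be adjacent to all of $U$ (it would share $k$ common neighbors with $u$, exceeding $c_1 = k-1$), we have $\bigcap_i U_i = \emptyset$, and hence $V(G') = \bigcup_i (V(G') \setminus U_i) \subseteq T_3$. This gives $t_3 = 8$ and $t_1 = t_2 = 0$, contradicting $2t_1 + t_2 = 6$.

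For $k = 5$, the constraints give $|N(x) \cap U| \in \{2, 3\} = \{c_3, c_2\}$, so $V(G') \setminus U_i \subseteq T_2 \cup T_3$ for every $i$, and the same union argument produces $T_1 = \emptyset$; together with $2t_1 + t_2 = 6$ this forces $t_1 = 0$, $t_2 = 6$, $t_3 = 1$. Each vertex of $T_j$ has degree $k - c_j = j$ in $G'$, so the sum of degrees in $G'$ equals $2 \cdot 6 + 3 \cdot 1 = 15$, which is odd and contradicts the handshake lemma. The main obstacle is precisely this $k = 5$ case: the equation $2t_1 + t_2 = 6$ alone admits four admissible triples $(t_1, t_2, t_3)$, and the symmetric argument eliminating $T_1$ is what pins us to the unique parity-violating one.
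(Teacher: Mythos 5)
Your proof is correct, but it takes a genuinely different route from the paper's in both remaining cases. For $k=6$ the paper enumerates the nonnegative solutions of $t_1+t_2+t_3=8$ and $t_2+2t_3=10$, translates each into a degree sequence for $G'$, and then argues that $U_1$ is the unique maximum independent set of $G'$, forcing $U_1=\cdots=U_6$ and hence $|N(u_i)\cap N(u_j)|=k>c_1$; your argument instead applies the local counting $|N(x)\cap U_i|\geq k-3$, $|N(x)\cap U|\geq k-3$ symmetrically to every $u_i$, notes $\bigcap_i U_i=\emptyset$, and concludes $T_3=V(G')$, which contradicts $2t_1+t_2=6$ in one line --- this is shorter, avoids the degree-sequence case analysis entirely, and is arguably more robust. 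For $k=5$ the paper simply observes that $n=2k+3=13$ is odd, so no $5$-regular graph on $13$ vertices exists; your handshake argument in $G'$ is the same parity obstruction in disguise, but your elimination of $T_1$ is actually superfluous: since $\sum_{v\in V(G')}\deg_{G'}(v)=t_1+2t_2+3t_3=(t_1+t_2+t_3)+(t_2+2t_3)=7+8=15$ for \emph{every} admissible triple, the parity contradiction does not depend on pinning down $(t_1,t_2,t_3)$, so your description of that step as ``the main obstacle'' overstates its role. Both of your case arguments are valid as written; the only cosmetic caveat is that the unnecessary $T_1$-elimination could be dropped.
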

\begin{proof} By Equation (\ref{eq9}), it suffice to show that $5 \leq k \leq 6$ is not possible. We first assume that $k = 6$. Thus, it must be equality throughout Equations (\ref{eq7}), (\ref{eq8}) and (\ref{eq9}). Therefore, $|X_{3}| = 0$, further, $x$ is adjacent to exactly three vertices in $U_{1}$ and is adjacent to exactly three vertices in $U$. Because $x$ is arbitrary vertex in $V(G') \setminus U_{1}$, this must be true for the other two vertices too. That is, for $y, z \in V(G') \setminus (U_{1} \cup \{x\})$, $y$ and $z$ are adjacent to exactly three vertices in $U_{1}$ and are adjacent to exactly three vertices in $U$. Hence, $V(G')$ is partitioned into two independent sets $U_{1}$ and $\{x, y, z\}$ and each vertex in $\{x, y, z\}$ is adjacent to exactly three vertices in $U_{1}$.
\vskip 5 pt

\noindent By Equation (\ref{tool1}), we have
\begin{equation}\label{eq4}
t_{1} + t_{2} + t_{3} = n - k - 1 = k + 2 = 8
\end{equation}
\noindent By Equations (\ref{eq4}) and (\ref{tool2}) we have that
\begin{equation}\label{eq5}
t_{1} + 2t_{2} + 3t_{3} = 3k = 18
\end{equation}
\noindent Further, Equations (\ref{eq4}) and (\ref{eq5}) yield
\begin{equation}\label{eq55}
t_{2} + 2t_{3} = 2k - 2 = 10.
\end{equation}
\noindent It can be checked that Equations (\ref{eq4}) and (\ref{eq55}) give \emph{(i)} $t_{1} = 0, t_{2} = 6, t_{3} = 2$, \emph{(ii)} $t_{1} = 1, t_{2} = 4, t_{3} = 3$ and \emph{(iii)} $t_{1} = 2, t_{2} = 2, t_{3} = 4$. The solution \emph{(i)} $t_{1} = 0, t_{2} = 6, t_{3} = 2$ gives that $G'$ has the degree sequences $3, 3, 2, 2, 2, 2, 2, 2$. This is not possible because all $x, y, z$ have degree $3$. Hence, $G'$ is the graph satisfying either \emph{(ii)} $t_{1} = 1, t_{2} = 4, t_{3} = 3$ and \emph{(iii)} $t_{1} = 2, t_{2} = 2, t_{3} = 4$. The solutions \emph{(ii)} and \emph{(iii)} give degree sequence $3, 3, 3, 2, 2, 2, 2, 1$ and $3, 3, 3, 3, 2, 2, 1, 1$ (see Figure \ref{h20}), respectively. From these two graphs, $U_{1}$ is the unique maximum independent set of $G'$ that contains $5$ vertices. Thus, $U_{1} = \cdots = U_{6}$. This yields a contradiction that $|N(u_{i}) \cap N(u_{j})| = |\{u\} \cup U_{1}| = k > k - 1 = c_{1}$. Hence, $k \neq 6$. Clearly, the case when $k = 5$ does not occur because $n = 2k + 3 = 2(5) + 3 = 13$ is odd number. Thus, $k = 4$.
\end{proof}

%%%%%%%%%%%%%%%%%%%%%%%%%%%%%%%%%%%%%%%%%%%%%%%%%%%%%%%%%%%%%%%%%%%%%%%%%%%%%%%%%%%%%%%%%%%%%%%%%%%%%%

\begin{figure}[htb]
\centering
\definecolor{ududff}{rgb}{0.30196078431372547,0.30196078431372547,1}
\resizebox{0.4\textwidth}{!}{%
\begin{tikzpicture}[line cap=round,line join=round,>=triangle 45,x=1cm,y=1cm]
%\clip(-9.412671555719031,-14.170088702203477) rectangle (37.26010403340945,13.984133991883636);
\draw [line width=0.4pt] (2,3)-- (4,3);
\draw [line width=0.4pt] (2,3)-- (4,2);
\draw [line width=0.4pt] (2,2)-- (4,3);
\draw [line width=0.4pt] (2,2)-- (4,1);
\draw [line width=0.4pt] (2,1)-- (4,2);
\draw [line width=0.4pt] (2,1)-- (4,1);
\draw [line width=0.4pt] (2,0)-- (4,3);
\draw [line width=0.4pt] (2,0)-- (4,2);
\draw [line width=0.4pt] (2,-1)-- (4,1);
\draw [line width=0.4pt] (7,3)-- (9,3);
\draw [line width=0.4pt] (7,3)-- (9,2);
\draw [line width=0.4pt] (7,3)-- (9,1);
\draw [line width=0.4pt] (7,2)-- (9,3);
\draw [line width=0.4pt] (7,2)-- (9,2);
\draw [line width=0.4pt] (7,1)-- (9,3);
\draw [line width=0.4pt] (7,1)-- (9,1);
\draw [line width=0.4pt] (9,2)-- (7,0);
\draw [line width=0.4pt] (7,-1)-- (9,1);
\draw [line width=0.4pt] (2,3)-- (4,3);
\draw (2.662601429578174,-1.3749662740360409) node[anchor=north west] {$H_{15}$};
\draw (7.65219168428471,-1.335985100171146) node[anchor=north west] {$H_{16}$};
\begin{scriptsize}
\draw [fill=ududff] (2,3) circle (2.5pt);
\draw [fill=ududff] (4,3) circle (2.5pt);
\draw [fill=ududff] (2,2) circle (2.5pt);
\draw [fill=ududff] (4,2) circle (2.5pt);
\draw [fill=ududff] (2,1) circle (2.5pt);
\draw [fill=ududff] (4,1) circle (2.5pt);
\draw [fill=ududff] (2,0) circle (2.5pt);
\draw [fill=ududff] (2,-1) circle (2.5pt);
\draw [fill=ududff] (7,3) circle (2.5pt);
\draw [fill=ududff] (9,3) circle (2.5pt);
\draw [fill=ududff] (7,2) circle (2.5pt);
\draw [fill=ududff] (9,2) circle (2.5pt);
\draw [fill=ududff] (7,1) circle (2.5pt);
\draw [fill=ududff] (9,1) circle (2.5pt);
\draw [fill=ududff] (7,0) circle (2.5pt);
\draw [fill=ududff] (7,-1) circle (2.5pt);
\end{scriptsize}
\end{tikzpicture}}
\vskip -0.25 cm
\caption{\footnotesize{The graph with degree sequences $3, 3, 3, 2, 2, 2, 2, 1$ (left) and $3, 3, 3, 3, 2, 2, 1, 1$ (right).}}
\label{h20}
\end{figure}
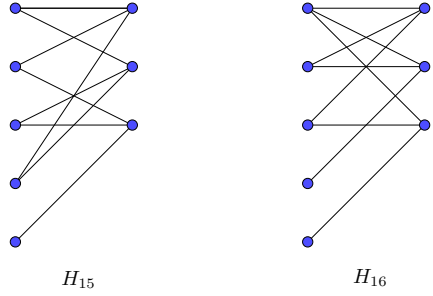

The above lemma implies that all graphs in the class of $SQSR(2k + 3, k, 0; k - 1, k - 2, k - 3)$ graphs are $SQSR(11, 4, 0; 3, 2, 1)$. Hence, it remains to prove that every $SQSR(11, 4, 0; 3, 2, 1)$ graph is $G_{1}$. Our approach here requires the following lemma.
\vskip 5 pt

\begin{lem}\label{b2}
If there exists a vertex $u \in V(G)$ which $N(u)$ contains $x$ and $y$ such that $N(x) \cap N(y) = \{u\}$, then $G$ is $G_{1}$.
\end{lem}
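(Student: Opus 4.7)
The plan is to pin down $G$ up to isomorphism from the neighborhood structure around $u$. Write $U = N(u) = \{x, y, u_3, u_4\}$ and $G' = G - N[u]$, so $|V(G')| = 11 - 5 = 6$. Since $N(x) \cap N(y) = \{u\}$ and each of $x, y$ has exactly three neighbors in $V(G')$ (because $U$ is independent, neither $x$ nor $y$ is adjacent to $u_3$ or $u_4$), these two triples are disjoint and partition $V(G')$. Set $X = N(x) \cap V(G') = \{x_1, x_2, x_3\}$ and $Y = N(y) \cap V(G') = \{y_1, y_2, y_3\}$. The condition $a = 0$ forces both $X$ and $Y$ to be independent (an edge inside $X$ would give $x$ two mutually adjacent neighbors), so every edge of $G'$ crosses the bipartition $X \cup Y$.

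Next I determine how $u_3, u_4$ attach to $V(G')$. Each of them has $k - 1 = 3$ neighbors in $V(G')$. Set $p = |N(u_3) \cap X|$, $q = |N(u_3) \cap Y|$, and define $p', q'$ analogously for $u_4$. Since $u_3$ is not adjacent to $x$ or $y$, the values $p = |N(u_3) \cap N(x)|$ and $q = |N(u_3) \cap N(y)|$ must lie in $\{c_1, c_2, c_3\} = \{1, 2, 3\}$, and similarly for $p', q'$. Combined with $p + q = p' + q' = 3$ and with the edge-count identities $p + p' = q + q' = 3$ from $U$ to each side of the bipartition, the only option up to relabeling is $(p, q) = (1, 2)$ and $(p', q') = (2, 1)$.

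From here I split cases on whether the $X$-neighborhoods of $u_3$ and $u_4$ overlap, and independently on whether their $Y$-neighborhoods do. Each choice determines $a_i := |N(x_i) \cap U|$ and $b_j := |N(y_j) \cap U|$ to be either all equal to $2$ or a permutation of $(3, 2, 1)$, placing every vertex of $V(G')$ into a class $T_r$. Because each $v \in V(G')$ has exactly $k - |N(v) \cap U|$ remaining neighbors, all of which cross the bipartition, the bipartite degree sequence of $G'[X, Y]$ is determined in each sub-case. I then enumerate the bipartite realizations of that degree sequence, and for every non-adjacent pair $(v_1, v_2)$ inside $V(G') \cup \{u_3, u_4\}$ I check that $|N(v_1) \cap N(v_2)| \in \{1, 2, 3\}$. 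A single configuration survives, and comparison with Figure 1 identifies it as $G_1$.

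The main obstacle will be the bookkeeping in the last step: several sub-cases survive the degree analysis, and within each one the bipartite graph on $X \cup Y$ admits several realizations of the prescribed degree sequence. For each candidate I must verify the full list of common-neighbor constraints on non-adjacent pairs both within $V(G')$ and between $\{u_3, u_4\}$ and $V(G')$, using that the forbidden values are $0$ and anything exceeding $c_1 = 3$; these checks are tight and should leave exactly the configuration realizing $G_1$.
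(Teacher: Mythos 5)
There is a genuine gap: the heart of the argument is announced but never carried out. After setting up the bipartition $V(G')=X\cup Y$ and the split $(p,q)=(1,2)$, $(p',q')=(2,1)$, you write that you will ``enumerate the bipartite realizations,'' ``check'' the common-neighbour constraints, and that ``a single configuration survives'' and the checks ``should leave exactly the configuration realizing $G_1$.'' That surviving-configuration claim \emph{is} the lemma; asserting that the bookkeeping will work out is not a proof of it. The paper's own argument (which starts from the same setup: $N_{G'}(x)$ and $N_{G'}(y)$ independent and partitioning $V(G')$, and $z=u_3$ meeting one side twice and the other once) completes this step concretely: after fixing $z\sim x_1,x_2,y_1$ it shows at least one of $x_1,x_2$ must be adjacent to $w=u_4$ (else $|N(x_1)\cap N(x_2)|=4>c_1$), then splits on whether $x_2w\in E(G)$, killing one branch by a degree contradiction and forcing every remaining edge in the other branch. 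You would need to supply an elimination of comparable explicitness; as written, your proposal stops exactly where the real work begins.

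Two smaller points in the part you did write. First, $p=|N(u_3)\cap X|$ is \emph{not} equal to $|N(u_3)\cap N(x)|$: the vertex $u$ is a common neighbour of $u_3$ and $x$, so $|N(u_3)\cap N(x)|=p+1$. Your constraint should be $p\le 2$ (from $p+1\le c_1=3$), not $p\ge 1$; together with $p+q=3$ this still yields $(1,2)$ or $(2,1)$, so the conclusion survives, but the stated justification is wrong. Second, the identity $p+p'=q+q'=3$ does not follow from counting edges ``from $U$ to each side''; it follows from the fact that every edge of $G'$ crosses the bipartition, so $\sum_{X}(k-\deg_U)=\sum_{Y}(k-\deg_U)$, i.e.\ $9-(p+p')=9-(q+q')$. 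That derivation is available from your setup, but it should be stated, since without it the case $(p,q)=(p',q')=(1,2)$ is not excluded.
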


\begin{proof} We may let $N_{G'}(x) = N(x) \setminus \{u\} = \{x_{1}, x_{2}, x_{3}\}, N_{G'}(y) = N(y) \setminus \{u\} = \{y_{1}, y_{2}, y_{3}\}$ and $N(u) \setminus \{x, y\} = \{z, w\}$. Clearly, $V(G) \setminus  N[u] = \{x_{1}, x_{2}, x_{3}, y_{1}, y_{2}, y_{3}\}$ because $n = 11$. Since $a = 0$, the sets $\{x_{1}, x_{2}, x_{3}\}$ and $\{y_{1}, y_{2}, y_{3}\}$ are independent. Because $c_{i} \leq 3$ for all $i \in \{1, 2, 3\}$, it follows that $z$ is adjacent to exactly two vertices in one of the sets $N_{G'}(x)$ or $N_{G'}(y)$ and is adjacent to exactly one vertex in the other set. Renaming vertices if necessary, we may suppose that $z$ is adjacent to $x_{1}$ and $x_{2}$ in $N_{G'}(x)$ and is adjacent to $y_{1}$ in $N_{G'}(y)$. Since $a = 0$, $x_{1}y_{1}, x_{2}y_{1} \notin E(G)$. If neither $x_{1}$ nor $x_{2}$ is adjacent to $w$, then both $x_{1}$ and $x_{2}$ are adjacent to $y_{2}$ and $y_{3}$. This implies a contradiction that $|N(x_{1}) \cap N(x_{2})| = 4 > 3 = c_{1}$. Thus, at least one of $x_{1}$ or $x_{2}$ is adjacent to $w$. Renaming vertices if necessary, we may suppose that $x_{1}w \in E(G)$.
\vskip 5 pt

\indent We first consider the case when $x_{2}w \in E(G)$. So, $\{x, z, w\} \subseteq N(x_{1}) \cap N(x_{2})$. Because $|N(x_{1}) \cap N(x_{2})| \leq 3 = c_{1}$, $x_{1}$ and $x_{2}$ are adjacent to distinct vertices $y_{2}$ and $y_{3}$. Renaming vertices if necessary, $x_{1}y_{2}, x_{2}y_{3} \in E(G)$. Thus, $wy_{2}, wy_{3} \notin E(G)$ because $a = 0$. Therefore, $y_{2}$ can be adjacent only $x_{3}$ implying a contradiction that $deg(y_{3}) < 4$.
\vskip 5 pt

\indent We now consider the case when $x_{2}w \notin E(G)$. Thus, $x_{2}y_{2}, x_{2}y_{3} \in E(G)$ as $deg(x_{2}) = 4$. Similarly, $x_{1}$ is adjacent either $y_{2}$ or $y_{3}$. Renaming vertices if necessary, we let $x_{1}y_{2} \in E(G)$. This implies by $a = 0$ that $y_{2}w \notin E(G)$. Further, $y_{3}w \in E(G)$ as otherwise $|N(x_{1}) \cap N(y_{3})| = 0 < c_{3}$. This yields $y_{1}w, y_{1}x_{3} \in E(G)$ because $deg(y_{1}) = 4$. Thus $x_{3}$ is adjacent to both $y_{2}$ and $y_{3}$. Clearly, the graph $G$ is $G_{1}$ and this proves the lemma.
\end{proof}

To complete the section, we suppose first that there exists $v \in V(G) \setminus N[u]$ such that $|N(v) \cap N(u)| \geq 3$. Because $c_{1} = 3$, $|N(v) \cap N(u)| = 3$. We may let $N(v) \cap N(u) = \{x, y, z\}$ and $\{w\} = N(u) \setminus N(v)$. Because $k = 4$, $v$ is adjacent to one vertex in $V(G) \setminus N[u]$, $s$ say. We see that $sx, sy, sz \notin E(G)$ since $a = 0$. So $sw \in E(G)$ otherwise $|N(s) \cap N(u)| = 0 < c_{3}$. Clearly, $wx, wy, wz \notin E(G)$ and this implies that $s$ is a vertex in $V(G)$ such that $N(s)$ has two vertices $v, w$ such that $N(v) \cap N(w) = \{s\}$. By Lemma \ref{b2}, $G$ is $G_{1}$. Hence, we may assume that that every vertex in $V(G) \setminus N(u)$ has at most $2$ neighbours in $N(u)$. Because $N(u)$ is independent, there are $12$ edges from $N(u)$ to $V(G) \setminus N[u]$. Because there are $6$ vertices in $V(G) \setminus N[u]$. This implies that $|N(v) \cap N(u)| = 2$ for each $v \in V(G) \setminus N[u]$. Moreover, by Lemma \ref{b2}, we can assume that every vertex $p \in \{x, y, z, w\}$ shares at least one neighbour with every vertex $q \in \{x, y, z, w\} \setminus \{p\}$ in $G'$. Hence, we may relabel the $6$ vertices in $V(G) \setminus N[u]$ as $v_{xy}, v_{xz}, v_{xw}, v_{yz}, v_{yw}, v_{zw}$ where the vertex $v_{ij}$ is adjacent to $i$ and $j$ for each $i, j \in \{x, y, z, w\}$. Because $a = 0$, $v_{xy}$ is not adjacent to $v_{xz}, v_{xw}, v_{yz}, v_{yw}$. Therefore, $deg(v_{xy}) < 4$, a contradiction. These complete the proof of the Main Theorem.

%\midskip

\section{Concluding Remark}
Quasi-strongly regular graphs of grade two have been widely studied via the independence from vertex $u$ of $t_i(u)$. Many results of these graphs have been published including the characterization of graphs with some class of parameters. For the graphs of higher grade, we believe that the lack of the above independence as mentioned in Example 2 of Goldberg \cite{G} causes the problem more complex. To the best of our knowledge, there is no any characterization of quasi-strongly regular graphs of grade three has been considered yet. Since the graph in Example 2 of \cite{G} was presented, we consider a generic class of quasi-strongly regular graphs that this graph belongs to. Therefore, we found a bounds condition of the number of vertices $n$ for a given degree $k$ of these graphs and with this condition we can characterize the class of $SQSR(11, 4, 0; 3, 2, 1)$ and the class of $SQSR(12, 4, 0; 3, 2, 1)$. For the future works, one may consider another class of quasi-strongly regular graphs of grade higher than $2$ to analysis some general conditions and to find a new technique to study these types of graphs. Additionally, a construction of these such graphs may also be considered.

\section{Acknowledgements}
The first author is supported by Thailand Science Research and Innovation
(TSRI) Basic Research Fund: Fiscal year 2021 under project number 64A306000005.

\medskip

\end{document}